\newcommand{\la}{\langle} \newcommand{\ra}{\rangle}
\newcommand{\range}{\operatorname{R}}
\newcommand{\ls}{\lesssim} \newcommand{\gs}{\gtrsim}
\newcommand{\R}{\mathbb{R}} \newcommand{\C}{\mathbb{C}}
\newcommand{\N}{\mathbb{N}} 
\DeclareMathOperator{\supp}{supp} \DeclareMathOperator{\graph}{graph}
\def\@strippedMR{} \def\@scanforMR#1#2#3\endscan{%
  \ifx#1M\ifx#2R\def\@strippedMR{#3}%
  \else\def\@strippedMR{#1#2#3}%
  \fi\fi} \renewcommand\MR[1]{\relax \ifhmode\unskip\spacefactor3000
  \space\fi \@scanforMR#1\endscan
  MR\MRhref{\@strippedMR}{\@strippedMR}} \makeatother
\newtheorem{theorem}{Theorem} \newtheorem{cor}[theorem]{Corollary}
\newtheorem{proposition}[theorem]{Proposition}
\newtheorem{corollary}[theorem]{Corollary}
\newtheorem{assumption}[theorem]{Assumption}
\theoremstyle{remark} \newtheorem{remark}{Remark}
\theoremstyle{definition} 
 \DeclareMathOperator{\diam}{diam}
\numberwithin{equation}{section} \numberwithin{theorem}{section}
\begin{document}

\title[Convolutions of singular measures and the Zakharov
System]{Convolutions of singular measures and applications to the
  Zakharov system}

\author[I. Bejenaru]{Ioan Bejenaru} \address[I. Bejenaru]{Department
  of Mathematics, University of Chicago, Chicago, IL 60637, USA}
\email{bejenaru@math.uchicago.edu}

\author[S. Herr]{Sebastian Herr} \address[S. Herr]{Mathematisches
  Institut, Rheinische Friedrich-Wilhelms-Universit\"at Bonn,
  Endenicher Allee 60, 53115 Bonn, Germany \and Mathematisches
  Institut, Heinrich-Heine-Universit\"at D\"usseldorf,
  Universit\"atsstr. 1, 40225 D\"usseldorf, Germany}
\email{herr@math.uni-bonn.de}

\begin{abstract}
  Uniform $L^2$-estimates for the convolution of singular measures
  with respect to transversal submanifolds are proved in arbitrary
  space dimension. The results of Bennett-Bez are used to extend
   previous work of Bejenaru-Herr-Tataru. As an
  application, it is shown that the 3D Zakharov system is locally
  well-posed in the full subcritical regime.
\end{abstract}

\subjclass[2000]{42B35, 47B38, 35Q55} \keywords{Convolution estimates,
  transversal submanifolds, $L^2$ bounds, Zakharov system,
  well-posedness}

\maketitle

\section{Introduction and main results}\label{sect:intro}
\noindent

In this paper we complete the development of a geometric
multilinear $L^2$-estimate which streamlines the analysis of a general
class of bilinear forms which appear in various types of nonlinear
PDE. In \cite{bejenaru_convolution_2010} Tataru and the authors proved uniform estimates
for the convolution of $L^2$ measures supported on transversal surfaces in three dimensions. 
This complemented the result of Bennett-Carbery-Wright in \cite{bennett_nonlinear_2005}.
In the present paper we generalize our previous result to higher dimensions
by using the recent work of Bennett-Bez in \cite{bennett_bez_2010}.

As an application, we establish a sharp result for the Zakharov
system in 3D.  Our result, when combined with the results in
\cite{bejenaru_zakharov_2009,ginibre_cauchy_1997}, closes the full
subcritical regime (in the sense of \cite[p. 387]{ginibre_cauchy_1997})
for the Zakharov system in all dimensions. As a consequence, the
remaining part of the paper is organized in two sections, each
containing results of independent interest.

\subsection{Convolutions of singular measures}
The first part of the paper is dedicated to a generalization to higher
dimensions of the results in \cite{bejenaru_convolution_2010}.  We
consider three subsets $\Sigma_1,\Sigma_2,\Sigma_3$ of submanifolds of
$\R^n$ whose codimensions add up to $n$ and which are transversal in
the sense that the normal spaces at each point span $\R^n$ and which
satisfy certain regularity assumptions. In this set-up we study the
restriction to $\Sigma_3$ of the convolution of two measures supported
on $\Sigma_1,\Sigma_2$. Our main results are global $L^2$ estimates.

We build on the result on nonlinear Brascamp-Lieb inequalities proved
in \cite{bennett_bez_2010}, see also
\cite{bennett_nonlinear_2005}. More precisely, we utilize the $m=3$
case of \cite[Theorem 1.3]{bennett_bez_2010} in order to extend the
trilinear case of \cite[Theorem 7.1]{bennett_bez_2010} to submanifolds
of general codimensions, formulated under global, quantitative
assumptions in the spirit of \cite{bejenaru_convolution_2010}.

Before we formulate the precise assumptions on the submanifolds, let
us introduce some notation: For numbers $m_1,m_2,m_3\in \N$ we define
the sets of indices $M_1=\{1,\ldots,m_1\}$,
$M_2=\{m_1+1,\ldots,m_1+m_2\}$, and
$M_3=\{m_1+m_2+1,\ldots,m_1+m_2+m_3\}$. Moreover, for a function
$\phi:\R^{n-m}\supset U \to \R^m$ we write
$\graph(\phi)=\{(x,\phi(x))^t\in \R^n: x\in U\}$.
\begin{assumption}\label{assumpt_main}
  There exist $0<\beta\leq 1$, $b>0$, $\theta>0$, $R>0$, and $m_i\in
  \N$ for $i=1,2,3$, with $n=m_1+m_2+m_3$, such that
  \begin{enumerate}
  \item\label{it:reg_cond} for every $i=1,2,3$ there exists an open
    set $U_i\subset \R^{n_i}$, $n_i=n-m_i$, and $\phi_i \in
    C^{1,\beta}(U_i;\R^{m_i})$ with the property
    \begin{equation}\label{eq:hoeldercond}
      R^{-\beta}\sup_{x\in U_i }|D\phi_i(x)|+ \sup_{x,x'\in U_i } \frac{|D\phi_i(x)-D\phi_i(x')|}{|x-x'|^{\beta}}
      \leq b,
    \end{equation}
    such that $\Sigma_i$ is relatively open, and compactly contained
    in $G_i \graph(\phi_i)$ for some orthogonal transformation $G_i\in
    O(n)$;

  \item\label{it:trans} for every $i=1,2,3$ and $\sigma_i \in\Sigma_i$
    and any orthonormal basis $\{\mathfrak{n}_{k}\}_{k \in M_i}$ of
    the normal space $N_{\sigma_i}(\Sigma_i)$ the determinant
    \[d(\sigma_1,\sigma_2,\sigma_3)=\det
    (\mathfrak{n}_{1}(\sigma_1),\ldots,\mathfrak{n}_{m_1}(\sigma_1),\ldots,\mathfrak{n}_{m_1+m_2+1}(\sigma_3),\ldots,\mathfrak{n}_{n}(\sigma_3))\]
    satisfies the uniform transversality condition
    \begin{equation}
      \label{eq:trans}
      \inf_{\sigma_1,\sigma_2,\sigma_3} |d(\sigma_1,\sigma_2,\sigma_3)| = \theta;
    \end{equation}
  \item\label{it:size} for every $i=1,2,3$ it holds
    \begin{equation}\label{eq:size}
      \diam(\Sigma_i)\leq R.
    \end{equation}
  \end{enumerate}
\end{assumption}

\begin{remark}
  The quantity $d(\sigma_1,\sigma_2,\sigma_3)$ is invariant under
  changes of the orthonormal bases within each normal space.
\end{remark}

We identify $f\in L^{2}(\Sigma_i)=L^{2}(\Sigma_i,\mu_i)$ --- $\mu_i$
being the $n_i$-dimensional Hausdorff-measure --- with the
distribution
\begin{equation*}
  \la f, \psi\ra=\int_{\Sigma_i} f(y) \psi(y) d\mu_i(y) ,\; \psi\in C_0^\infty(\R^n).
\end{equation*}
For $f\in L^{2}(\Sigma_1),g\in L^{2}(\Sigma_2)$ with compact support
the convolution $f\ast g$ is defined as the distribution
\begin{equation*}
  \la f\ast g,\psi \ra=\int_{\Sigma_1}\int_{\Sigma_2}f(x)g(y)\psi(x+y)d\mu_1(x)d\mu_2(y)
  , \; \psi \in C_0^\infty(\R^n).
\end{equation*}

Since a-priori the restriction of $f\ast g$ to sets of measure zero is
not well-defined, we begin with $f \in C_0(\Sigma_1)$ and $g \in
C_0(\Sigma_2)$. Then $f \ast g \in C_0(\R^n)$ and has a well-defined
trace on $\Sigma_3$. Once we have proved an appropriate $L^2$-bound,
the trace of $f \ast g $ on $\Sigma_3$ can be defined by density for
arbitrary $f \in L^{2}(\Sigma_1)$ and $g \in L^{2}(\Sigma_2)$.

Following the ideas of \cite{bejenaru_convolution_2010} we first note
the behavior under linear transformations.

\begin{proposition}\label{prop:conv_gen}
  Let $\Sigma_1$, $\Sigma_2$, $\Sigma_3$ satisfy the Assumption
  \ref{assumpt_main} with
  \[\theta\leq |d(\sigma_1,\sigma_2,\sigma_3)|\leq 2\theta,\]
  and suppose that the estimate
  \begin{equation}\label{eq:conv_gen_a}
    \|f\ast g\|_{L^2(\Sigma_3)}\leq C {\theta}^{-\frac12}
    \|f\|_{L^2(\Sigma_1)}\|g\|_{L^2(\Sigma_2)},
  \end{equation}
  holds true for all functions $f\in L^2(\Sigma_1)$, $g\in
  L^2(\Sigma_2)$.  If $T:\R^n\to \R^n$ is an invertible, linear map
  and $\Sigma_i'=T\Sigma_i$, then the estimate
  \begin{equation}\label{eq:conv_gen}
    \|f'\ast g'\|_{L^2(\Sigma_3')}\leq 2C {\theta'}^{-\frac12}
    \|f'\|_{L^2(\Sigma_1')}\|g'\|_{L^2(\Sigma_2')}
  \end{equation}
  holds true for all functions $f'\in L^2(\Sigma_1')$, $g'\in
  L^2(\Sigma_2')$, where
  \begin{equation*}
    \theta'=\inf_{\sigma_1',\sigma_2',\sigma_3'}|d'(\sigma_1',\sigma_2',\sigma_3')|
  \end{equation*}
  is defined in analogy to Assumption \ref{assumpt_main}
  \ref{it:trans}.
\end{proposition}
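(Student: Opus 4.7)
My plan is to reduce the estimate \eqref{eq:conv_gen} on the transformed surfaces $\Sigma_i'$ to the hypothesis \eqref{eq:conv_gen_a} on the $\Sigma_i$ via the direct change of variable $\sigma = T^{-1}\sigma'$. By duality, \eqref{eq:conv_gen_a} is equivalent to the symmetric trilinear estimate
\begin{equation*}
\left|\int_{\Sigma_1}\int_{\Sigma_2}\int_{\Sigma_3} f(x)g(y)h(z)\,\delta(x+y-z)\,d\mu_1 d\mu_2 d\mu_3\right| \leq C\theta^{-\frac12}\|f\|_{L^2(\Sigma_1)}\|g\|_{L^2(\Sigma_2)}\|h\|_{L^2(\Sigma_3)},
\end{equation*}
and similarly for the primed surfaces. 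First I would substitute $x=T\tilde x$, $y=T\tilde y$, $z=T\tilde z$ in the primed trilinear form and use $\delta(Tu)=|\det T|^{-1}\delta(u)$ and $d\mu_i'(T\sigma) = J_i(\sigma)\,d\mu_i(\sigma)$, where $J_i(\sigma)$ is the Jacobian of $T|_{T_\sigma \Sigma_i}$. Absorbing half of each Jacobian into $F(x) := f'(Tx)J_1(x)^{1/2}$ (so that $\|F\|_{L^2(\Sigma_1)} = \|f'\|_{L^2(\Sigma_1')}$, and analogously for $G,H$), the primed trilinear form becomes
\begin{equation*}
|\det T|^{-1}\int\int\int F(x)G(y)H(z)\,[J_1(x)J_2(y)J_3(z)]^{1/2}\,\delta(x+y-z)\,d\mu_1 d\mu_2 d\mu_3.
\end{equation*}

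The crux of the argument is then the pointwise identity
\begin{equation*}
|\det T|^{-2}\,J_1(\sigma_1)J_2(\sigma_2)J_3(\sigma_3) = \frac{|d(\sigma_1,\sigma_2,\sigma_3)|}{|d'(T\sigma_1,T\sigma_2,T\sigma_3)|},
\end{equation*}
which collapses the Jacobian factor to the ratio of transversality determinants. To prove it I would work in orthonormal frames at $\sigma_i$ and at $T\sigma_i$ adapted to the respective tangent-normal splittings: since $T$ sends tangents to tangents, its matrix in these frames is block upper-triangular, so $|\det T| = J_i\,|\det M_i|$ where $M_i$ is the $m_i\times m_i$ normal-to-normal block. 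Separately, from the observation that $N_{T\sigma_i}\Sigma_i' = (T^t)^{-1}N_{\sigma_i}\Sigma_i$ as subspaces (image-tangents being $T$ of source-tangents), the orthonormal target-normal frame can be written $(T^t)^{-1}N_i M_i$ with \emph{the same} $M_i$ providing the Gram-Schmidt correction; pulling $(T^t)^{-1}$ and the block-diagonal of the $M_i$'s out of the columns in the definition of $d'$ gives $|d'| = |\det T|^{-1}|d|\prod_i|\det M_i|$. Combining the two formulas yields the identity.

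Once this is in hand, the hypothesis $|d|\leq 2\theta$ provides the pointwise bound $|\det T|^{-1}[J_1J_2J_3]^{1/2} = (|d|/|d'|)^{1/2}\leq (2\theta/\theta')^{1/2}$, so applying the trilinear version of \eqref{eq:conv_gen_a} to $|F|,|G|,|H|$ gives the primed estimate with constant $\sqrt 2\,C(\theta')^{-1/2}$, which is even better than the claimed $2C(\theta')^{-1/2}$. The main obstacle is the linear-algebraic identity: one has to track that $T$ does \emph{not} preserve normality (the normal spaces transform by $(T^t)^{-1}$, not $T$), and recognize that the orthonormalization matrices $M_i$ appearing in the expansion of $d'$ are precisely those appearing in the block-triangular decomposition that gives $|\det T|=J_i|\det M_i|$, so that the volume factors cancel cleanly into the ratio $|d|/|d'|$.
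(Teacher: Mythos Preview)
Your argument is correct and follows the same overall route as the paper: dualize to the symmetric trilinear form, change variables under $T$, and reduce everything to the pointwise Jacobian identity
\[
|\det T|^{-2}\,J_1(\sigma_1)J_2(\sigma_2)J_3(\sigma_3)=\frac{|d(\sigma_1,\sigma_2,\sigma_3)|}{|d'(T\sigma_1,T\sigma_2,T\sigma_3)|},
\]
which is exactly the paper's identity \eqref{mn} (there $J_i=(g_i'/g_i)^{1/2}$). The difference lies only in how this identity is proved. The paper factors $T=RS$ through an auxiliary linear map $S$ that sends the source normals to coordinate directions, and then carries out an explicit parallelepiped-volume computation in that normalized setting. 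Your argument is more intrinsic: you use that $T$ is block upper-triangular in adapted orthonormal frames (tangents map to tangents), giving $|\det T|=J_i\,|\det M_i|$, and that $(T^t)^{-1}$ carries normal spaces to normal spaces, which lets you read off $|d'|=|\det T|^{-1}|d|\prod_i|\det M_i|$ and combine. This is shorter and avoids the intermediate reduction.

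One minor correction: the Gram--Schmidt matrix $K_i$ with $N_i'=(T^t)^{-1}N_iK_i$ is not literally $M_i$ but its transpose. Indeed $T^t$ maps $N_{T\sigma_i}\Sigma_i'$ into $N_{\sigma_i}\Sigma_i$, so $T^tN_i'=N_iK_i$ with $K_i=N_i^tT^tN_i'=(N_i'^{\,t}TN_i)^t=M_i^t$. Since only $|\det K_i|=|\det M_i|$ enters, your conclusion is unaffected.
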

In summary, the size of the constant is determined only by the
transversality properties of the submanifolds.

Next, we look at the fully transversal case.  The dual formulation of
a local version of the following result for codimension $1$
submanifolds is contained in \cite[Theorem 7.1]{bennett_bez_2010}.

\begin{theorem}\label{thm:conv}
  Let $\Sigma_1,\Sigma_2,\Sigma_3$ be submanifolds in $\R^n$ which
  satisfy Assumption \ref{assumpt_main} with parameters $0<\beta\leq
  1$, $b=1$ and $\theta = \frac12$, and $R=1$. Then for each $f\in
  L^2(\Sigma_1)$ and $g\in L^2(\Sigma_2)$ the restriction of the
  convolution $f\ast g$ to $\Sigma_3$ is a well-defined
  $L^2(\Sigma_3)$-function which satisfies
  \begin{equation}\label{eq:conv}
    \|f\ast g\|_{L^2(\Sigma_3)}\leq C  \|f\|_{L^2(\Sigma_1)}\|g\|_{L^2(\Sigma_2)},
  \end{equation}
  where the constant $C$ depends only on $\beta$ and $n$.
\end{theorem}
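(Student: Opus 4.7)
The plan is to dualize the convolution estimate, localize onto caps where each submanifold is well-approximated by its tangent plane, apply the $m=3$ nonlinear Brascamp-Lieb inequality of \cite{bennett_bez_2010} on each compatible triple, and reassemble globally via an almost-orthogonality argument. The starting point is to pass to the trilinear form $\Lambda(f,g,h)=\int_{\Sigma_3}(f\ast g)(z)\,h(z)\,d\mu_3(z)$, so that \eqref{eq:conv} is equivalent by duality to the uniform bound $|\Lambda(f,g,h)|\leq C\|f\|_{L^2(\Sigma_1)}\|g\|_{L^2(\Sigma_2)}\|h\|_{L^2(\Sigma_3)}$. After reflecting $\Sigma_3$, this is a symmetric trilinear restriction-extension form attached to three submanifolds of codimensions $m_1,m_2,m_3$ with $m_1+m_2+m_3=n$, whose normal spaces at every point still span $\R^n$ with determinant $\geq\theta=1/2$.

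Next I would fix a small scale $\rho=\rho(\beta,n)\in(0,1)$ and decompose each $\Sigma_i$ into essentially disjoint caps $\tau_i^{k_i}$ of diameter $\sim\rho$. The $C^{1,\beta}$-bound \eqref{eq:hoeldercond} with $b=1$ forces $D\phi_i$ to oscillate by at most $\rho^\beta$ on each cap, so each cap is a uniform $O(\rho^\beta)$-perturbation of its tangent plane, with essentially constant normal bundle. Using the uniform transversality \eqref{eq:trans}, the equation $x_1+x_2=z$ with $x_i\in\tau_i^{k_i}$, $z\in\tau_3^{k_3}$ can have solutions only on an $O(1)$-uniformly locally finite set of triples $(k_1,k_2,k_3)$, so that the decomposition $\Lambda(f,g,h)=\sum_{k_1,k_2,k_3}\Lambda(f_{k_1},g_{k_2},h_{k_3})$ reduces to an almost-diagonal sum.

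On each compatible triple the $m=3$ case of \cite[Theorem 1.3]{bennett_bez_2010} --- which extends \cite[Theorem 7.1]{bennett_bez_2010} to arbitrary codimensions --- applies to the three $C^{1,\beta}$-perturbations of transversal affine subspaces and yields the local estimate $|\Lambda(f_{k_1},g_{k_2},h_{k_3})|\leq C_0\|f_{k_1}\|_2\|g_{k_2}\|_2\|h_{k_3}\|_2$ with $C_0=C_0(\beta,n)$. Summing with Cauchy-Schwarz over the almost-diagonal index set then produces
\[
|\Lambda(f,g,h)|\ls\Big(\sum_{k_1}\|f_{k_1}\|_2^2\Big)^{1/2}\Big(\sum_{k_2}\|g_{k_2}\|_2^2\Big)^{1/2}\Big(\sum_{k_3}\|h_{k_3}\|_2^2\Big)^{1/2}=\|f\|_2\|g\|_2\|h\|_2,
\]
with constant depending only on $\beta$ and $n$.

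The main obstacle is verifying that the constant $C_0$ delivered by the nonlinear Brascamp-Lieb inequality is genuinely uniform across all caps and admissible configurations --- i.e.\ that it depends only on $\beta,n,\theta$ and is stable under the quantitative $C^{1,\beta}$-perturbation imposed by \eqref{eq:hoeldercond}. This is precisely where the stability built into \cite[Theorem 1.3]{bennett_bez_2010} enters; once this is in place, the combinatorics of the almost-orthogonality assembly is a routine adaptation of \cite{bejenaru_convolution_2010} from three dimensions to arbitrary codimensions, with Proposition \ref{prop:conv_gen} available to normalize the parameters on each cap.
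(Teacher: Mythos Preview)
Your outline follows the paper's strategy: dualize to a trilinear form, localize to small caps, apply \cite[Theorem~1.3]{bennett_bez_2010} locally, and reassemble. But there is a genuine gap at the crucial step. You write that on each cap triple ``the $m=3$ case of \cite[Theorem~1.3]{bennett_bez_2010} \ldots\ applies to the three $C^{1,\beta}$-perturbations of transversal affine subspaces.'' That theorem, however, is not a statement about convolutions on submanifolds; it bounds integrals of the form $\int_{\R^n}\prod_j f_j(B_j(x))\,dx$ for $C^{1,\beta}$ maps $B_j:\R^n\to\R^{n_j}$ whose differentials satisfy a Brascamp--Lieb datum condition at a point. The work you are skipping is exactly the content of the paper's Step~2: one parametrizes the incidence set $\{\varphi_1(a)+\varphi_2(b)=\varphi_3(c)\}$ via the implicit function theorem applied to $F(x,y)=\varphi_1+\varphi_2-\varphi_3$, solves $y=G(x)$, and then \emph{constructs} the three maps $B_1,B_2,B_3:\R^n\to\R^{n_j}$ from the coordinates of $G$. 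Only after this reparametrization does the trilinear form become $\int (f\circ\varphi_1)(B_1x)(g\circ\varphi_2)(B_2x)(h\circ\varphi_3)(B_3x)\,m(x)\,dx$, and one must then compute $\ker DB_j(x_0)$ and check that these kernels span $\R^n$, which is where the normalization of Step~1 (normal spaces aligned with coordinate blocks) pays off. Your parenthetical ``which extends \cite[Theorem~7.1]{bennett_bez_2010} to arbitrary codimensions'' is in fact a description of the present theorem, not of something already available in Bennett--Bez; carrying out that extension is precisely what the proof must do.

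A secondary remark: the almost-orthogonality assembly you invoke is superfluous here. Since $R=1$ and $\rho=\rho(\beta,n)$, the number of caps on each $\Sigma_i$ is $O_{\beta,n}(1)$, so a finite triangle-inequality sum already gives a constant depending only on $\beta,n$; this is what the paper does in Step~1 (``a finite partition depending only on the dimension''). Your claim that the compatible triples form an ``almost-diagonal'' set, while partly true (any two indices determine the third up to $O(1)$), does not by itself yield the $\ell^2\times\ell^2\times\ell^2$ bound via a single Cauchy--Schwarz --- for fixed $k_3$ the number of compatible $(k_1,k_2)$ is typically unbounded in $\rho$ --- and it is the finiteness of the partition that rescues the argument.
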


Finally, in view of future applications let us note how the estimate
depends on the more general hypothesis of Assumption
\ref{assumpt_main}

\begin{cor}\label{cor:conv_theta}
  Let $\Sigma_1,\Sigma_2,\Sigma_3$ be submanifolds in $\R^n$ which
  satisfy Assumption \ref{assumpt_main} with parameters $0<\beta \leq
  1$, $b>0$, $0<\theta\leq 1/2$.  Then for each $f\in L^2(\Sigma_1)$
  and $g\in L^2(\Sigma_2)$ the restriction of the convolution $f\ast
  g$ to $\Sigma_3$ is a well-defined $L^2(\Sigma_3)$-function which
  satisfies
  \begin{equation}\label{eq:conv_theta}
    \|f\ast g\|_{L^2(\Sigma_3)}\leq C \theta^{-\frac12}
    \|f\|_{L^2(\Sigma_1)}\|g\|_{L^2(\Sigma_2)},
  \end{equation}
  where $C$ depends only on $\beta$, $n$, and the size of the quantity
  $R^\beta b \theta^{-1}$.
\end{cor}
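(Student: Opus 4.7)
The plan is to deduce Corollary \ref{cor:conv_theta} from Theorem \ref{thm:conv} by a chain of reductions --- isotropic rescaling, localization, and the linear-invariance of Proposition \ref{prop:conv_gen} --- under which the constant acquires a controlled dependence on $R^\beta b \theta^{-1}$. First, the dilation $x\mapsto R^{-1} x$ preserves every normal space, hence the transversality $\theta$, while transforming the H\"older condition \eqref{eq:hoeldercond} into the analogous one with $b$ replaced by $b' \lesssim R^\beta b$. The induced change of variables in the $L^2(\Sigma_i)$-norms and in the convolution introduces only explicit Jacobian factors, so it is enough to prove the estimate under the additional assumption $R = 1$.

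Next, partition each $\Sigma_i$ into finitely many relatively open pieces $\Sigma_i^j$ of diameter $\lesssim \delta:=c(b')^{-1/\beta}$ for some small absolute constant $c$; the total number of pieces is $\lesssim (b')^{n/\beta}$. On each piece, choose the orthogonal frame $G_i^j$ so that $D\phi_i^j = 0$ at the centre of the piece; the H\"older bound forces $\sup |D\phi_i^j|\leq b'\delta^\beta\leq c^\beta$, and a further dilation by $\delta^{-1}$ puts the piece into the framework of Assumption \ref{assumpt_main} with $R=1$, $b=1$, and unchanged transversality $\theta$.

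Now on each triple of pieces, construct an invertible linear map $T$ --- essentially the product of dilations along a basis obtained by Gram--Schmidt orthogonalization of the concatenated normal frames at reference points $\sigma_i^\ast$ --- such that $T^{-1}\Sigma_i^{j_i}$ satisfies Assumption \ref{assumpt_main} with $b=1$, $R=1$, and $\theta=\tfrac12$. Apply Theorem \ref{thm:conv} to the transformed triple, then use Proposition \ref{prop:conv_gen} to transport the estimate back with the $\theta^{-1/2}$ loss. Finally, assemble: disjointness of the pieces $\Sigma_3^{j_3}$ handles the outer $\ell^2$-sum, while for each fixed $j_3$ the support of $f_{j_1}\ast g_{j_2}$ lies in a set of diameter $\lesssim \delta$ so that only $O(1)$ pairs $(j_1, j_2)$ contribute non-trivially, and Cauchy--Schwarz closes the argument.

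The main technical hurdle is the construction and quantitative control of $T$: one must pick it so that $T^{-1}\Sigma_i^{j_i}$ simultaneously satisfies the H\"older, diameter, and normalized-transversality hypotheses of Theorem \ref{thm:conv}, uniformly across admissible triples. Since $\|T^{-1}\|$ can be as large as a negative power of $\theta$, the transformation distorts both the H\"older parameter and the piece's diameter; compensating for this distortion forces a second refinement of the decomposition and is precisely what embeds the dependence of the final constant on $R^\beta b \theta^{-1}$.
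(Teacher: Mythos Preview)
Your overall architecture---partition, then use Proposition~\ref{prop:conv_gen} to transport the estimate of Theorem~\ref{thm:conv} back with a $\theta^{-1/2}$ loss---matches the paper's. But two steps do not go through as written.

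\emph{Reassembly.} The claim that for fixed $j_3$ only $O(1)$ pairs $(j_1,j_2)$ contribute is false. The support of $f_{j_1}\ast g_{j_2}$ lies in $\Sigma_1^{j_1}+\Sigma_2^{j_2}$, and the constraint that this meet $\Sigma_3^{j_3}$ fixes only the \emph{sum} of the piece-centers; since $n_1+n_2-n=m_3\geq 1$, there are $\sim\delta^{-m_3}$ contributing pairs. This is harmless---a crude sum over \emph{all} triples of pieces costs only a power of $R^\beta b\theta^{-1}$, which the constant is allowed to absorb---but your Cauchy--Schwarz argument as stated does not close.

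\emph{The linear map $T$.} This is the real gap. After your normalizations you have $R=1$, $b=1$, and transversality $\theta$. Any $T$ that straightens the concatenated normal frame to the standard basis has condition number $\sim\theta^{-1}$, so $T^{-1}\Sigma_i^{j_i}$ can have diameter $\sim\theta^{-1}$, violating the size hypothesis of Theorem~\ref{thm:conv}. A further isotropic refinement does not help: shrinking the pieces shrinks both the tangential and the normal extent by the same factor, so the ratio---which is what drives the blow-up under $T^{-1}$---is unchanged. The paper resolves this not by further partitioning but by an additional $L^2$-orthogonality step: once $R^\beta b\theta^{-1}\ll 1$ (which requires the $\theta$-dependent scale $\delta^\beta b\ll\theta$, not your $\theta$-independent $\delta$), each $\Sigma_i$ lies in a slab of thickness $\ll R\theta$ along its own normal directions, and one decomposes the \emph{other} two surfaces into parallel slabs of the same thickness (equation~\eqref{eq:planeloc}). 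After this slab localization, all three pieces satisfy $|(\sigma_i-\sigma_i^0)\cdot n_k(\sigma_j^0)|\ll R\theta$ for \emph{every} $j,k$, and now $T^{-1}=(R\theta)^{-1}A^t$ does map each piece into the unit ball. This plane-layer orthogonality is the missing ingredient; your ``second refinement'' does not substitute for it.
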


\subsection{The 3D Zakharov system}
In this section we consider the initial value problem associated with
the Zakharov system
\begin{equation}\label{eq:zak}
  \begin{split}
    i\partial_t u+\Delta u= &n u \text{ in } (0,T)\times \R^3,\\
    \partial_t^2 n -\Delta n=&\Delta |u|^2 \text{ in } (0,T)\times \R^3,\\
    (u,n,\partial_t n)|_{t=0}\in & H^s(\R^3)\times
    H^\sigma(\R^3)\times H^{\sigma-1}(\R^3).
  \end{split}
\end{equation}

The Zakharov system is a model for Langmuir oscillations in a plasma,
cf. \cite{zakharov_collapse_1972} and \cite[Chapter 13]{Sulem-book} for more information.

Local weak solutions for \eqref{eq:zak} with smooth data were constructed by Sulem-Sulem in \cite{MR552204}, 
and local well-posedness for data in $H^2 \times H^1 \times L^2$ was established by Ozawa-Tsutsumi in \cite{ozawa_existence_1992}.
Provided that the Schr\"odinger part is small in $H^1$, global well-posedness for data in the energy space, 
see \cite{bourgain_wellposedness_1996} for details, 
was established by Bourgain-Colliander in \cite{bourgain_wellposedness_1996}.

We are interested in the low regularity well-posedness theory of
\eqref{eq:zak}. Our notion of well-posedness includes existence of
generalized solutions, uniqueness in a suitable subspace, local Lipschitz
continuity and persistence of initial regularity.  It has been shown by
Ginibre-Tsutsumi-Velo in \cite{ginibre_cauchy_1997} that
\eqref{eq:zak} is locally well-posed for $\sigma \geq 0$, $2s\geq
\sigma+1$, $\sigma \leq s \leq \sigma+1$. We extend this result to the full subcritical range in the
sense of \cite[p. 387]{ginibre_cauchy_1997}.

\begin{theorem}\label{thm:main-zak}
  The Cauchy problem \eqref{eq:zak} is locally well-posed in
  $H^s(\R^3)\times H^\sigma(\R^3)\times H^{\sigma-1}(\R^3)$ for
  $\sigma > -\frac12$, $\sigma \leq s \leq \sigma+1$, $2s>\sigma
  +\frac12$.
\end{theorem}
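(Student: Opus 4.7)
The plan is to reformulate the wave equation as a first-order system, work in adapted $V^2$-based function spaces, and reduce local well-posedness to two dyadic bilinear estimates whose core ingredient is the convolution bound of Theorem \ref{thm:conv} applied to the Schr\"odinger paraboloid and the light cones. Setting $n_{\pm}=n\mp i|\nabla|^{-1}\partial_t n$, the system becomes
\begin{equation*}
(i\partial_t+\Delta)u=\tfrac{1}{2}(n_++n_-)u,\qquad (i\partial_t\pm|\nabla|)n_{\pm}=\pm|\nabla|(|u|^2).
\end{equation*}
I introduce $\ell^2$-dyadic function spaces $F^s$ for $u$, built from $V^2_\Delta$ atoms, and $F^\sigma_\pm$ for $n_\pm$, built from $V^2_{\pm|\nabla|}$ atoms, together with their dual nonlinear spaces $N^s$ and $N^\sigma_\pm$, in the spirit of \cite{bejenaru_convolution_2010,bejenaru_zakharov_2009}. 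By the linear theory of $U^p/V^p$ spaces and a contraction-mapping argument, Theorem \ref{thm:main-zak} reduces to the two bilinear estimates
\begin{equation*}
\|nu\|_{N^s}\ls\|n\|_{F^{\sigma}}\|u\|_{F^s},\qquad \bigl\||\nabla|(u\bar v)\bigr\|_{N^{\sigma}_{\pm}}\ls\|u\|_{F^s}\|v\|_{F^s}.
\end{equation*}

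After Littlewood--Paley localisation at spatial frequencies $N_1,N_2,N_3$ and modulation cut-offs at scales $L_1,L_2,L_3$, duality and Plancherel convert both bilinear bounds into a single model trilinear $L^2$ inequality
\begin{equation*}
\Bigl|\iint w_1 w_2 w_3\,dt\,dx\Bigr|\ls K(N_j,L_j)\prod_{j=1}^3\|w_j\|_{L^2_{t,x}},
\end{equation*}
in which each $w_j$ is Fourier-supported in an $L_j$-neighbourhood of one of the three characteristic surfaces at spatial frequency $\sim N_j$. This is precisely a convolution of three measures on thickened transversal submanifolds of spacetime. After rescaling each thickened slab to unit diameter and reparametrising so that Assumption \ref{assumpt_main} is satisfied, Proposition \ref{prop:conv_gen} together with Theorem \ref{thm:conv} (or Corollary \ref{cor:conv_theta}) yields the bound with the transversality factor $\theta^{-1/2}$. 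A direct normal-vector computation shows $\theta\sim N_{\max}$ for both the paraboloid--cone--cone and paraboloid--paraboloid--cone resonant configurations, the gain arising from the slope mismatch between the cone's degree-one and the paraboloid's degree-two normals; unwinding the scaling produces the familiar gain
\begin{equation*}
K(N_j,L_j)\ls N_{\max}^{-1/2}(L_1 L_2 L_3)^{1/2} L_{\max}^{-1/2}.
\end{equation*}

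Third, the dyadic pieces are summed using the $\ell^2$ structure of the $F$-spaces and the standard embeddings between $V^2$ and $U^p$ to close the modulation summations. Each frequency regime is handled separately: the high$\times$low interactions in $nu$ force $\sigma\leq s\leq\sigma+1$; the balanced high$\times$high $\to$ low case in $|\nabla|(|u|^2)$ pins down $\sigma>-\tfrac12$ and $2s>\sigma+\tfrac12$; and the balanced high$\times$high $\to$ high case in $nu$ reduces again to $2s>\sigma+\tfrac12$. All remaining regimes are strictly subsumed by these endpoints.

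The main obstacle will be this high$\times$high $\to$ low case of the wave nonlinearity. Two Schr\"odinger waves at frequency $N\gg 1$ produce a wave output at frequency $M\ll N$; Theorem \ref{thm:conv} supplies the transversality gain $N^{-1/2}$, which must then be balanced against the derivative $|\nabla|\sim M$ and the regularity weights $M^{\sigma}N^{-2s}$. Summability in the double dyadic sum over $N$ and $M$ holds precisely when $\sigma>-\tfrac12$ and $2s>\sigma+\tfrac12$, with the strict inequalities accommodating the logarithmic losses from modulation summation and matching the scaling heuristic of \cite[p.~387]{ginibre_cauchy_1997}.
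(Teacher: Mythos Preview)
Your outline has a genuine geometric gap in the core step. First, both trilinear forms in the Zakharov system are of type paraboloid--paraboloid--cone (two Schr\"odinger factors and one wave factor); there is no paraboloid--cone--cone configuration. More seriously, these are three hypersurfaces in $\R^4$, so their codimensions sum to $3\neq 4$ and Assumption~\ref{assumpt_main} is \emph{not} satisfied; Theorem~\ref{thm:conv} and Corollary~\ref{cor:conv_theta} do not apply directly. In the paper this is handled by foliating the cone into codimension-$2$ leaves $S_3^c=S_3\cap\{\xi\cdot v=c\}$ for a well-chosen direction $v$, and only after this foliation does one obtain a triple to which the convolution theorem applies.

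Second, your claim that ``a direct normal-vector computation shows $\theta\sim N_{\max}$'' is incorrect. After the necessary parabolic rescaling and foliation, the determinant of normals is governed by the $3\times 3$ minor $\det(\xi_1/|\xi_1|,\xi_2/|\xi_2|,v)$, which is $\sim|\sin\angle(\xi_1,\xi_2)|$ and can be arbitrarily small in the high--high$\to$low regime (where $\xi_1$ and $\xi_2$ are nearly antipodal). The paper resolves this degeneration with an angular Whitney decomposition: at angular scale $A^{-1}$ one gets $\theta\sim A^{-1}$ from Corollary~\ref{cor:conv_theta}, and the $A^{1/2}$ loss is recouped from the $A^{-1}$-length of the foliation interval via \eqref{integral}, together with a separate (elementary) argument for the nearly parallel case. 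Without this mechanism your asserted bound $K\ls N_{\max}^{-1/2}(L_1L_2L_3)^{1/2}L_{\max}^{-1/2}$ is unsupported in the low-modulation high--high$\to$low case (where there is no resonance lower bound forcing $L_{\max}\gs N_1^2$), and the summation you describe would not close. The $U^p/V^p$ framework is a legitimate alternative to the $X^{s,b}$ spaces used in the paper, but it does not bypass the transversality analysis; you still need the foliation and the angular decomposition.
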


For a more detailed statement we refer the reader to \cite [Theorem
1.1]{bejenaru_zakharov_2009}.

The almost admissible endpoint $(s,\sigma)=(0,-\frac12)$, i.e. bottom
left corner of the convex region of admissible $(s,\sigma)$, matches
the 2D result obtained in \cite[Theorem 1.1]{bejenaru_zakharov_2009}
and extends the result of \cite[formula (1.10)]{ginibre_cauchy_1997}
for dimensions $d\geq 4$ to $d=3$. 

\section{Convolution estimates}

\begin{proof}[Proof of Proposition \ref{prop:conv_gen}]
  By density and duality, the claimed estimate is equivalent to
  \begin{equation}\label{gend}
    \begin{split}
      I(f,g,h):=& \int f(\sigma_1') g(\sigma_2') h(\sigma_3')
      \delta(\sigma_1'+\sigma_2'-\sigma_3')
      d\mu_1'(\sigma_1') d\mu_2'(\sigma_2')d\mu_3'(\sigma_3') \\
      \leq& 2C {\theta'}^{-\frac12} \|f\|_{L^2(\Sigma_1')}
      \|g\|_{L^2(\Sigma_2')} \|h\|_{L^2(\Sigma_3')},
    \end{split}
  \end{equation}
  for all nonnegative, continuous $f,g,h$. We assume that $\varphi_i :
  \R^{n_i}\supset \Omega_i \rightarrow \R^n$ is a global
  parametrization for $\Sigma_i, i=1,2,3$.  In that case
  $\varphi_i':=T \varphi_i$ is a parametrization for $\Sigma_i',
  i=1,2,3$.

  Using the above parameterizations \eqref{gend} is given as follows
  \begin{align*}
    I(f,g,h) =\int f(\varphi_1'(x)) g(\varphi_2'(y))
    h(\varphi_3'(z))(g_1'g_2'g_3')^{\frac12} d\nu'(x,y,z)
  \end{align*}
  where $g_i'=\det(D \varphi_i')^tD \varphi_i'$, and with respect to
  the measure
  \[
  d\nu'(x,y,z)=\delta(\varphi_1'(x)+\varphi_2'(y)-\varphi_3'(z)) dx dy
  dz.
  \]
  With $g_i=\det(D \varphi_i)^tD \varphi_i$ and the measure
  \[
  d\nu(x,y,z)=\delta(\varphi_1(x)+\varphi_2(y)-\varphi_3(z))
  (g_1g_2g_3)^{\frac12} dx dy dz
  \]
  an upper bound on $I(f,g,h)$ is given by
  \begin{align*}
    & \frac{\sup M(x,y,z)}{|\det T|} \int  \tilde{f}(\varphi_1(x)) \tilde{g}(\varphi_2(y)) \tilde{h}(\varphi_3(z))(g_1g_2g_3)^{\frac12} d\nu(x,y,z) \\
    \leq & 2\theta^{\frac12}\theta'^{-\frac12} \int  \tilde{f}(\sigma_1) \tilde{g}(\sigma_2) \tilde{h}(\sigma_3) \delta(\sigma_1+\sigma_2-\sigma_3) d\mu_1(\sigma_1) d\mu_2(\sigma_2) d\mu_3(\sigma_3) \\
    \leq &2C \theta'^{-\frac12} \|f\|_{L^2(\Sigma_1')}
    \|g\|_{L^2(\Sigma_2')} \|h\|_{L^2(\Sigma_3')},
  \end{align*}
  where we have used the definitions
  \[
  M = \prod_{i=1}^3 {g_i'}^{\frac14} g_i^{-\frac14}, \quad \tilde{f}=
  {g_1'}^{\frac14}g_1^{-\frac14} f(T \cdot),
  \]
  similarly for $\tilde{g}, \tilde{h}$.  We have also used that
  Dirac's $\delta$ obeys the simple rule
  \[
  \delta(T\varphi_1(x)+T\varphi_2(y)-T\varphi_3(z))= (\det T)^{-1}
  \delta(\varphi_1(x)+\varphi_2(y)-\varphi_3(z))
  \]
  and the following identity
  \begin{equation}\label{mn}
    \frac{M(x,y,z)}{\det T}
    = \left(\frac{d(\varphi_1(x),\varphi_2(y),\varphi_3(z))}{d'(\varphi_1'(x),\varphi_2'(y),\varphi_3'(z))}\right)^{\frac12},
  \end{equation}
  so that \eqref{mn} is the only claim which remains to be proved.

  For brevity, let $\sigma_1=\varphi_1(x)$, $\sigma_2=\varphi_2(y)$,
  $\sigma_3=\varphi_3(z)$, $\sigma_i'=T\sigma_i$, be arbitrary points
  on $\Sigma_i$, which will be fixed for the subsequent calculation.

  For $i=1,2,3$ we fix orthonormal bases
  $\{\mathfrak{n}_k(\sigma_i)\}_{k \in M_i}$ of the normal spaces and
  define the invertible matrix
  \[S=S(\sigma_1,\sigma_2,\sigma_3)=
  (\mathfrak{n}_1(\sigma_1),\ldots,\mathfrak{n}_{m_1}(\sigma_1),\ldots,
  \mathfrak{n}_{n_3+1}(\sigma_3),\ldots,\mathfrak{n}_{n}(\sigma_3))^t
  \]
  as well as $R=R(\sigma_1,\sigma_2,\sigma_3)=TS^{-1}$. Then, $T=RS$
  and $S$ has the property that if $\Sigma_i''=S \Sigma_i$ then $
  \{\mathfrak{e}_k\}_{k \in M_i} $ is an orthonormal basis of the
  normal space of $\Sigma_i''$ at $S\sigma_i$, $i=1,2,3$. We observe
  that
  \begin{equation}\label{eq:sr}
    \frac{\det((T D\varphi_i)^t T D \varphi_i)}{\det((D\varphi_i)^t D
      \varphi_i)}= \frac{\det((S D\varphi_i)^t S D
      \varphi_i)}{\det((D\varphi_i)^t D \varphi_i)} \, \cdot \, \frac{\det((RS
      D\varphi_i)^t RS D \varphi_i)}{\det((S D\varphi_i)^t S D \varphi_i)}.
  \end{equation}
  Thus, without restricting the generality of the problem, we can
  assume that an orthonormal basis of the normal space of $\Sigma_i'$
  at $\sigma_i'=T \sigma_i$ is given as $\{\mathfrak{e}_k\}_{k \in
    M_i}$, since this takes care of the first factor and it also
  provides the computation for the reverse situation which takes care
  of the second factor.

  Under this assumption the rows $n_k^t$ of $T$, i.e. $n_k:=T^t
  \mathfrak{e}_k$, $k \in M_i$ form a basis of the normal space of
  $\Sigma_i$ at $\sigma_i$, but not necessarily an orthonormal
  basis. We rely on two basic geometric facts: The first is that
  $(\det(A^t A))^{\frac12}$ is the $p$-dimensional volume of the
  parallelepiped spanned by the columns of $A\in \R^{n \times p}$.
  The second is that if
  \[
  A=(A_1|A_2), \, A_k \in \R^{n \times p_k},\, p_1+p_2=n, \text{ and }
  \range(A_1)\perp \range(A_2),
  \]
  then the volume of the parallelepiped spanned by the columns of $A$
  is the product of the volumes of the parallelepipeds spanned by the
  columns of $A_1$, $A_2$, respectively, i.e.
  \[
  \det(A) = (\det(A_1^t A_1))^{\frac12}(\det(A_2^t A_2))^{\frac12}.
  \]
  We define the submatrices
  \[N_i'=(\mathfrak{e}_{k_i+1},\ldots,\mathfrak{e}_{k_i+m_i}), \text{
    with } k_i \text{ such that } M_i=\{k_i+1,\ldots,k_i+m_i\},\] and
  $N_i =T^t N_i'$, where the columns $n_k$ are normal to $\Sigma_i$,
  but do not necessarily form an orthonormal set. We compute for
  $i=1,2,3$ based on the considerations above that
  \begin{align*}
    & \frac{\det((TD\varphi_i)^t T D\varphi_i)}{\det((D\varphi_i)^t
      D\varphi_i)}
    = \frac{\det(T D\varphi_i)^t(T D\varphi_i) \det(N_i^t N_i)}{\det^2(D\varphi_i| N_i)} \\
    =& \frac{\det^2(T) \det(T D\varphi_i)^t(T D\varphi_i) \det(N_i^t
      N_i)}{\det^2(TD\varphi_i|TN_i)}
  \end{align*}
  where here in in the sequel we suppress the evaluation of
  $\varphi_i$ at $x,y,z$, respectively. Next, we use
  \[
  \det(TD\varphi_i|TN_i)=\det(TD\varphi_i|P_i TN_i),
  \]
  where $P_i$ is the orthogonal projection onto $N_i'$, and conclude
  \[\det(TD\varphi_i|TN_i)=(\det((P_i TN_i)^t P_i
  TN_i))^{\frac12}(\det((TD\varphi_i)^t TD\varphi_i))^{\frac12},
  \]
  such that in summary
  \begin{align*}
    \frac{\det((TD\varphi_i)^t T D\varphi_i)}{\det((D\varphi_i)^t
      D\varphi_i)} = \frac{\det^2(T)\det(N_i^t N_i)}{\det((P_i
      TN_i)^tP_i TN_i)} = \frac{\det^2 T}{\det(N_i^t N_i)}.
  \end{align*}
  In the last step we have used the particular form of the vectors in
  $N_i'$ and the fact that $T^t=(N_1|N_2|N_3)$, which yields
  \[
  \det((P_i TN_i)^tP_i TN_i)={\det}^2(N_i^t N_i).
  \]
  The above computation holds for all $i \in \{1,2,3\}$, therefore
  \[\frac{M(x,y,z)}{\det T}=(\det T)^{-1} \prod_{i=1}^3
  \left(\frac{\det^2 T}{\det(N_i^t N_i)}\right)^{\frac14} =
  \left(\frac{\det(N_1|N_2|N_3)}{\prod_{i=1}^3 (\det(N_i^t
      N_i))^{\frac12}}\right)^{\frac12}
  \]
  This expression is invariant with respect to the choice of normal
  vectors in $N_i$, hence we can use an orthonormal set to obtain
  \[\frac{M(x,y,z)}{\det T}=\left(
    d(\varphi_1(x),\varphi_2(y),\varphi_3(z))\right)^{\frac12}
  \]
  and in view of our previous reduction in \eqref{eq:sr} the claim
  \eqref{mn} follows. This ends the proof of Proposition
  \ref{prop:conv_gen}.
\end{proof}

\begin{proof}[Proof of Theorem \ref{thm:conv}]
  In what follows we use Landau's notation $o(1)$ for scalars, vectors
  or matrices to denote a quantity which can be made arbitrarily small
  as $R=\max(\diam(\Sigma_1),\diam(\Sigma_2),\diam(\Sigma_3))\to 0$.
  For brevity we introduce the shorthand notation
  \[
  (x_{i},\ldots, x_j)^t= x_{i,j}, \quad i<j.
  \]
  We subdivide the proof into two steps:

  {\em Step 1.} By a finite partition (depending only on the
  dimension), linear changes of coordinates as in the proof of
  Corollary \ref{cor:conv_theta} below we can reduce the problem to
  the following set-up: There exists a triplet
  $(\sigma_1^0,\sigma_2^0,\sigma_3^0) \in \Sigma_1 \times \Sigma_2
  \times \Sigma_3$ where $\{\mathfrak{e}_k\}_{k \in M_i}$ is a basis
  for $N_{\sigma^0_i}(\Sigma_i)$, $i=1,2,3$, such that by the implicit
  function theorem we have $C^{1,\beta}$-parametrizations
  $\varphi_i:\Omega_i \to \R^n$, for open subsets $\Omega_i$ of the
  unit ball in $\R^{n_i}$, centered at $a_i^0$, given as
  \begin{align*}
    \varphi_1(x_{m_1+1,n}) =&(\phi_1(x_{m_1+1,n}), \ldots, \phi_{m_1}(x_{m_1+1,n}), x_{m_1+1,n})^t, \\
    \varphi_2(x_{1,m_1}, x_{m_1+m_2+1,n})
    =&(x_{1,m_1}, \phi_{m_1+1}(x_{1,m_1}, x_{m_1+m_2+1,n}),\ldots,\\& \phi_{m_1+m_2}(x_{1,m_1}, x_{m_1+m_2+1,n}), x_{m_1+m_2+1,n})^t,\\
    \varphi_3(x_{1,n_3})=&(x_{1,n_3}, \phi_{n_3+1}(x_{1,n_3}),
    \ldots,\phi_n(x_{1,n_3}))^t,
  \end{align*}
  where $m_i=n-n_i$, such that $\Sigma_i=\varphi_i(\Omega_i)$,
  $\diam(\Sigma_i)$ is small enough, $\varphi_i(a_i^0)=\sigma_i^0$,
  where the submanifolds intersect
  $\varphi_1(a_1^0)+\varphi_2(a_2^0)=\varphi_3(a_3^0)$, and
  \begin{equation} \label{inder}
    \partial_{l} \phi_j(a_i^0)= 0, \text{ for all } j \in M_i, \text{
      and all } 1\leq l\leq n_i.
  \end{equation}

  {\em Step 2.} We have that for each $i \in \{1,2,3\}$
  \begin{equation} \label{DP} \det[D\varphi_i^t D\varphi_i ](a^0_i) =
    1, \quad \det[D\varphi_i^t D\varphi_i ] =1+o(1)
  \end{equation}
  and the determinant of the normals satisfies
  \[
  d(\sigma_1^0,\sigma_2^0,\sigma_3^0)=1, \quad
  d(\sigma_1,\sigma_2,\sigma_3) = 1+o(1).
  \]
  In this set-up, we need to estimate
  \begin{align*}
    & \int (f \circ \varphi_1)(x_{m_1+1,n}) (g \circ \varphi_2) (x_{1,m_1}, y_{m_1+m_2+1,n}) (h \circ \varphi_3) (y_{1,n_3}) \\
    & \delta(\varphi_1(x_{m_1+1,n}) + \varphi_2 (x_{1,m_1}, y_{m_1+m_2+1,n})-\varphi_3(y_{1,n_3}))\\
    & (\det[D\varphi_1^t D\varphi_1 ]\det[
    D\varphi_2^tD\varphi_2]\det[ D\varphi_3^t D\varphi_3])^{\frac12}
    dx_{1,n} d y_{1,n}
  \end{align*}
  For the function
  \[F(x_{1,n}, y_{1,n})= \varphi_1(x_{m_1+1,n}) + \varphi_2
  (x_{1,m_1}, y_{m_1+m_2+1,n})-\varphi_3(y_{1,n_3})\] it follows from
  the implicit function theorem that there exists a $C^{1,\beta}$
  function $G$ such that $F(x_{1,n}, y_{1,n})=0$ if and only if
  $y_{1,n}=G(x_{1,n})$, since
  \begin{equation} \label{detdelta} | \det \partial_{y_{1,n}} F| =
    1+o(1),
  \end{equation} because \eqref{inder} yields that the matrix is close to the diagonal
  matrix with $-1$ as the first $n_3$ diagonal entries and $+1$ as the remaining $m_3$ diagonal entries. Since the following is
  true
  \[
  \delta(F(x_{1,n}, y_{1,n})) = | \det\partial_{y_{1,n}} F|^{-1}
  \delta(y_{1,n}-G(x_{1,n})),
  \]
  the above integral the above integral can be rewritten as
  \begin{align*}
    &\int (f \circ \varphi_1) (x_{m_1+1,n}) (g \circ \varphi_2) ( x_{1,m_1}, G_{m_1+m_2+1,n}(x_{1,n}))\\
    &\qquad (h \circ \varphi_3)(G_{1,n_3}(x_{1,n})) m(x_{1,n}) d
    x_{1,n}
  \end{align*}
  where $m(x_{1,n})= 1+o(1)$ in the domain of integration, which
  follows from \eqref{DP} and \eqref{detdelta}.  Then, following the
  ideas in \cite{bennett_bez_2010,bennett_nonlinear_2005}, we define
  the maps $B_i : \R^n \rightarrow \R^{n_i}$ by
  \begin{align*}
    B_1 x_{1,n}&= x_{m_1+1,n}, \qquad B_2 x_{1,n}=(x_{1,m_1}, G_{m_1+m_2+1,n}(x_{1,n})), \\
    B_{3}x_{1,n}&= G_{1,n_3}(x_{1,n}).
  \end{align*}
  From the properties of $\varphi_i$ and \eqref{DP} it follows that
  $B_1,B_2,B_3$ are $C^{1,\beta}$ functions.  With these notations the
  above integral becomes
  \begin{align*}
    \int (f \circ \varphi_1) (B_1 x_{1,n}) (g \circ \varphi_2) ( B_2
    x_{1,n}) (h \circ \varphi_3)(B_3 x_{1,n}) m(x_{1,n})d x_{1,n}
  \end{align*}
  Next, we will verify the assumptions of \cite[Theorem
  1.3]{bennett_bez_2010} on the kernels of $D B_i(x_0)$, where
  $x_0=([a_2^0]_{1,m_1},a_1^0)\in \R^n$. We start with $i=1$:
  \[
  DB_1(x_0) =\begin{pmatrix} 0& I_{n_1}
  \end{pmatrix}
  \]
  hence an orthonormal basis of $\ker DB_1(x_0)$ is of the form
  $\{\mathfrak{e}_{k}\}_{k \in M_1}$. For $i=2$ we compute
  \[
  DG(x_0)
  =-[(\partial_{y_{1,n}}F(x_0,G(x_0)))^{-1}\partial_{x_{1,n}}F(x_0,G(x_0)]=
  \begin{pmatrix}
    I_{n_3}& 0\\
    0& -I_{m_3}
  \end{pmatrix}
  \]
  which implies
  \[
  DB_2(x_0)=\begin{pmatrix}
    I_{m_1} & 0\\
    0 & -I_{m_3}
  \end{pmatrix}
  \]
  and an orthonormal basis of $\ker DB_2(x_0)$ is of the form
  $\{\mathfrak{e}_k\}_{k \in M_2}$.  Concerning $i=3$, the computation
  of $DG(x_0)$ above immediately yields
  \[
  DB_3(x_0)=\begin{pmatrix} I_{n_3}& 0
  \end{pmatrix}
  \]
  and an orthonormal basis of $\ker DB_3(x_0)$ is given as
  $\{\mathfrak{e}_k\}_{k \in M_3}$.

  From the above characterizations of the kernels of $dB_j$, it
  follows from \cite[formula (25)]{bennett_bez_2010} that
  \[
  \left|\star \bigwedge_{j=1}^3 \star X_j(DB_j(x_0))\right| = 1,
  \]
  where we use the notation of \cite{bennett_bez_2010}. This allows us
  to invoke the result of \cite[Theorem 1.3]{bennett_bez_2010} in a
  small neighborhood of $x_0$, whose size depends only on $\beta$ and
  $n$.
\end{proof}

For the remaining proof we will follow closely the argument in
\cite[Proof of Corollary 1.6]{bejenaru_convolution_2010}.

\begin{proof}[Proof of Corollary \ref{cor:conv_theta}] {\em Step 1.}
  We first carry out the proof under the additional hypothesis
  \begin{equation}\label{eq:rbt_strong}
    R^\beta b \theta^{-1}\ll  1.
  \end{equation}
  We have $\|D\phi_i\|\leq b R^\beta\ll 1$ throughout $U_i\subset
  \R^{n_i}$.

  Let $i=1,2,3$ and $\sigma_i^0\in \Sigma_i$ be fixed. Define the
  normal vectors $\{n_k(\sigma_i)\}_{k \in M_i}$ at $\sigma_i
  =G_i\cdot (x,\phi_i(x))^t$ to be the columns of the matrix
  \[
  G_i \begin{pmatrix}-D\phi_i^t(x)\\I_{m_i}\end{pmatrix}\in
  \R^{n\times m_i}.
  \]
  These vectors satisfy
  \begin{equation}
    |n_k(\sigma_i) - n_k (\sigma_i^0)| \ls b
    R^{\beta} \ll \theta,
    \label{eq:normals_diff}\end{equation}
  for all $k \in M_i$, $i = 1,2,3$.  By the Gram-Schmidt
  orthonormalization procedure, we can also construct from
  $\{n_k(\sigma_i)\}_{k \in M_i}$ an orthonormal basis
  $\{\mathfrak{n}_k(\sigma_i)\}_{k \in M_i}$ of the normal space at
  $\sigma_i\in \Sigma_i$ satisfying \eqref{eq:normals_diff}, which
  shows that
  \begin{equation}
    |d(\sigma_1^0,\sigma_2^0,\sigma_3^0)- d(\sigma_1,\sigma_2,\sigma_3)|
    \ll \theta.
    \label{eq:det_diff}\end{equation}  
  Moreover, we observe that
  \[
  |(\sigma_i -\sigma_i^0)\cdot  n_k(\sigma_i^0) | \ls bR^{1+\beta}
  \ll R \theta, \quad k \in M_i,
  \]
  which shows that $\Sigma_i$ is contained in a plain layer of
  thickness $\ll R \theta$ with respect to the $n_k(\sigma_i^0)$
  direction, for all $k \in M_i$. By $L^2(\R^n)$-orthogonality with
  respect to such layers it suffices to prove the desired bound
  \eqref{eq:conv_theta} in the case when the other two submanifolds
  are contained in similar regions, i.e.
  \begin{equation}
    |(\sigma_i -\sigma_i^0) \cdot  n_k(\sigma_j^0) | \ll R \theta,
    \quad k\in M_j, \, i,j= 1,2,3.
    \label{eq:planeloc}
  \end{equation}
  We will apply Proposition \ref{prop:conv_gen} with the matrix
  \begin{equation*}
    T=  R\theta \, 
    (A^t)^{-1}, \quad A =(n_1(\sigma_1^0),\ldots,n_{m_1}(\sigma_1^0),\ldots,
    n_{n_3}(\sigma_3^0),\ldots,n_{n}(\sigma_3^0)).
  \end{equation*}
  It remains to show that the submanifolds
  $\tilde{\Sigma}_i:=T^{-1}\Sigma_i$ satisfy the assumptions of
  Theorem \ref{thm:conv}, i.e.
  \begin{enumerate}
  \item\label{it:s} the size condition $\diam(\tilde{\Sigma}_i)\leq
    1$,
  \item\label{it:t} the transversality condition \eqref{eq:trans} with
    $\theta = \frac12$,
  \item\label{it:h} the regularity condition \eqref{eq:hoeldercond}
    with $R=b=1$.
  \end{enumerate}
  Concerning item \ref{it:s} we observe that
  \[
  T^{-1} (\sigma_i -\sigma_i^0) = \frac{1}{R\theta} ( 
  n_1(\sigma_1^0)\cdot (\sigma_i -\sigma_i^0) , \ldots, 
  n_n(\sigma_3^0)\cdot (\sigma_i -\sigma_i^0) )^t,
  \]
  such that \eqref{eq:planeloc} shows $\diam(\tilde{\Sigma}_i)\leq 1$.

  In order to obtain the transversality condition in \ref{it:t}, we
  estimate
  \begin{equation}\label{eq:norm_inverse}
    \|A^{-1}\|\ls |\det A|^{-1}\sim \theta^{-1}, \quad \|T\|\ls R.
  \end{equation}
  Let $k \in M_i$. We define at $\tilde{\sigma}_i\in \tilde{\Sigma}_i$
  a normal vector $\tilde{n}_k(\tilde{\sigma}_i)$ to
  $\tilde{\Sigma}_i$ by
  \begin{equation}\label{eq:change_normals}
    \tilde{n}_k(\tilde{\sigma}_i)=
    A^{-1}n_k(T\tilde{\sigma}_i).
  \end{equation}
  By construction for $\tilde\sigma_i^0 = T^{-1} \sigma_i^0$ we have $
  \tilde{n}_k(\tilde{\sigma}_i^0) = \mathfrak{e}_k$. By
  \eqref{eq:normals_diff} and \eqref{eq:norm_inverse} it follows that
  \begin{equation}\label{eq:new_n}
    |\tilde{n}_k(\tilde{\sigma}_i) -\mathfrak{e}_k| \ls R^\beta b\ll \theta.
  \end{equation}
  Thus, we have found a basis $\{\tilde{n}_k(\tilde{\sigma}_i)\}_{k\in
    M_i}$ of $N_{\tilde{\sigma}_i}(\tilde{\Sigma}_i)$.  By the
  Gram-Schmidt process, we can recursively construct an orthonormal
  basis $\{\tilde{\mathfrak{n}}_k(\tilde{\sigma}_i)\}_{k\in M_i}$ with
  the property \eqref{eq:new_n}.  This in turn yields the desired
  transversality condition
  \[
  \tilde{d}(\tilde{\sigma}_1,\tilde{\sigma}_2,\tilde{\sigma}_3)\geq
  1/2.
  \]
  Concerning the regularity condition in \ref{it:h} we define
  \[\tilde{\Phi}_i(p)=(R\theta)^{-1}[(G^{-1}_i T
  p)_{n_i+1,n}-\phi_i((G^{-1}_i Tp)_{1,n_i})],\] such that with
  $Q_i=T^{-1}G_i (U_i\times \R^{m_i})$ it is
  \[\tilde{\Sigma}_i=\{p \in Q_i\subset \R^n:\tilde{\Phi}_i(p)=0\}.\]
  We would like to resolve this equation for $p_k$, $k \in M_i$. Now,
  for $k\leq l$ let $I_{k,l}$ be the $l-k+1\times n$ matrix, such that
  $I_{k,l}p=p_{k,l}$. It is
  \[
  D\tilde{\Phi}_i (p)=I_{n_i+1,n}G_i^t(A^t)^{-1}-D\phi_i((G_i^t
  Tp)_{1,n_i})I_{1,n_i}G_i^t(A^t)^{-1}.
  \]
  To keep the exposition clear we discuss the case $i=1$ only.
  \[
  D_{1,m_1}\tilde{\Phi}_1 (p)=\left(I_{n_1+1,n}G_1^t
    -D\phi_1((G_1^tTp)_{1,n_1})I_{1,n_1}G_1^t\right)(A^t)^{-1}I_{1,m_1}^t.
  \]
  Since $\|D\phi_1\|\ll 1$ in $U_1$ and by construction of $A$ it
  holds
  \[
  \|I_{n_1+1,n}G_1^t
  -D\phi_1((G_1^tTp)_{1,n_1})I_{1,n_1}G_1^t-I_{1,m_1}A^t\|\ll 1,
  \]
  which shows that
  \[
  \|D_{1,m_1}\tilde{\Phi}_1 (p)-I_{m_1}\|\ll 1.
  \]
  It also implies that for
  \[
  D_{m_1+1,n}\tilde{\Phi}_1 (p)=\left(I_{n_1+1,n}G_1^t
    -D\phi_1((G_1^tTp)_{1,n_1})I_{1,n_1}G_1^t\right)(A^t)^{-1}I_{m_1+1,n}^t
  \]
  we have
  \[
  \|D_{m_1+1,n}\tilde{\Phi}_1 (p)\|\ll 1.
  \]
  At $p=\tilde{\sigma}_1^0$ we evaluate
  \[D_{1,m_1}\tilde{\Phi}_1(\tilde{\sigma}_1^0)=I_{m_1} \text{ and
  }D_{m_1+1,n}\tilde{\Phi}_1(\tilde{\sigma}_1^0)=0.\] The implicit
  function theorem yields a global resolution $\tilde{\phi_1}\in
  C^{1,\beta}(\tilde{U}_1)$ with domain $\tilde{U}_1=I_{1,n_1}(Q_1)$
  such that $\tilde{\Phi}_1(\tilde{\phi}_1(\tilde{x}),\tilde{x})=0$
  with $D\tilde{\phi}_1(\tilde{x}^0)=0$ and the analog of
  \eqref{eq:hoeldercond} is satisfied with $R=b=1$.

  {\em Step 2.} Finally, we remove the additional assumption
  \eqref{eq:rbt_strong}. In general we have $R^\beta b \theta^{-1}\gs
  1$. We partition each submanifold $\Sigma_i$ into about
  $R\delta^{-1}$ pieces of diameter $\delta$ for $\delta^\beta b \ll
  \theta$. It remains to prove that for each such piece we can find a
  graph representation satisfying Assumption \ref{assumpt_main}
  \ref{it:reg_cond} with $R$ replaced with $\delta$. In order to to
  so, in each piece we select a point $G_i(a_i^0,\phi_i(a_i^0))^t$ and
  define a rotation $O_i\in \R^{n \times n}$ with the property
  \[
  O_i \, \mathrm{range}
  \begin{pmatrix} 0\\ I_{m_i}
  \end{pmatrix}= \mathrm{ range}
  \begin{pmatrix} -D\phi_i^t(a_i^0)\\I_{m_i}
  \end{pmatrix}
  ,
  \]
  and the implicit function theorem yields a representation of the
  piece as $G_iO_i \graph(\tilde{\phi}_i)$ with vanishing differential
  at a point. This implies \eqref{eq:hoeldercond} with $R$ replaced by
  $\delta$.
\end{proof}

\section{The Zakharov system}\label{sect:zakharov}
\subsection{Notation and function spaces}\label{subsect:not}
\noindent
We adopt the notation from \cite{bejenaru_zakharov_2009}: We write
$A\ls B$ if there exists a harmless constant $c>0$ such that $A\leq c
B$. Moreover, we write $A\gs B$ if $B\ls A$ and $A\sim B$ if $A\ls B$
and $A\gs B$. Throughout this paper we will denote dyadic numbers
$2^n$ for $n \in \N$ by the corresponding upper-case letters, e.g.
$N=2^n,L=2^l,\ldots$.

Let $\psi\in C^\infty_0((-2,2))$ be an even, non-negative function
with the property $\psi(r)=1$ for $|r|\leq 1$.  We use it to define a
partition of unity in $\R$,
\[
1 = \sum_{N \geq 1}\psi_N, \; \psi_1 = \psi, \;
\psi_N(r)=\psi\left(\frac{r}{N}\right)-\psi\left(\frac{2r}{N}\right),
\; N=2^n \geq 2.
\]
Thus $\supp\psi_1\subset [-2,2]$ and $\supp{\psi}_N\subset
[-2N,-N/2]\cup[N/2,2N]$ for $N\geq 2$. For $f:\R^3 \to \C$ we define
the dyadic frequency localization operators $P_N$ by
\begin{equation*}
  \mathcal{F}_x(P_N f)(\xi) =  \psi_N(|\xi|) \mathcal{F}_x f(\xi).
\end{equation*}
For $u:\R^3\times \R \to \C$ we define $(P_N u)(x,t)=(P_N
u(\cdot,t))(x)$. We will often write $u_N=P_Nu$ for brevity.  We
denote the space-time Fourier support of $P_N$ by the corresponding
Gothic letter
\begin{align*}
  \mathfrak{P}_1=&\left\{(\xi,\tau) \in \R^3\times \R \mid |\xi|\leq 2 \right\},\\
  \mathfrak{P}_N=&\left\{(\xi,\tau) \in \R^3\times \R \mid N/2\leq
    |\xi|\leq 2N \right\}.
\end{align*}

Moreover, for dyadic $L\geq 1$ we define the modulation localization
operators
\begin{align}
  \mathcal{F} (S_L u)(\tau,\xi) &= \psi_L(\tau+|\xi|^2)
  \mathcal{F} u(\tau,\xi) \qquad \text{(Schr\"odinger case)}, \label{eq:s_mod}\\
  \mathcal{F} (W^\pm_L u)(\tau,\xi) &= \psi_L(\tau\pm |\xi|)
  \mathcal{F} u(\tau,\xi) \qquad \text{(Wave case)},\label{eq:w_mod}
\end{align}
and the corresponding space-time Fourier supports
\begin{align*}
  \mathfrak{S}_1=&\left\{(\xi,\tau) \in \R^3\times \R \mid |\tau+|\xi|^2|\leq 2 \right\},\\
  \mathfrak{S}_L=&\left\{(\xi,\tau) \in \R^3\times \R \mid L/2\leq
    |\tau+|\xi|^2|\leq 2L \right\},
\end{align*}
respectively
\begin{align*}
  \mathfrak{W}^{\pm}_1=&\left\{(\xi,\tau) \in \R^3\times \R \mid |\tau \pm |\xi||\leq 2 \right\},\\
  \mathfrak{W}^{\pm}_L=&\left\{(\xi,\tau) \in \R^3\times \R \mid
    L/2\leq |\tau \pm |\xi||\leq 2L \right\}.
\end{align*}

Next we introduce the decompositions with respect to angular
variables. For each $A \in \N$ we choose a decomposition $\{
\omega_A^j \}_{j \in \Omega_A}$ of $\mathbb{S}^2$ with the following
properties:
\begin{enumerate}
\item Each $\omega_A^j$ is a spherical cap with angular opening
  $A^{-1}$, i.e the angle $\angle(x,y)$ between any two vectors in
  $x,y\in \omega_A^j$ satisfies \[|\angle(x,y)|\leq A^{-1}.\] \item
  $\mathbb{S}^2$ is the almost disjoint union of $\{ \omega_A^j \}_{j
    \in \Omega_A}$, i.e. if $\chi_{\omega_A^j}$ denotes the
  characteristic function of the cap $\omega_A^j$, then have the
  following
  \[
  1\leq \chi(x):=\sum_{j \in \Omega_A} \chi_{\omega_A^j}(x)\leq 3,
  \qquad \forall x \in \mathbb{S}^2,
  \]
  and we require that any two centers of caps in our collection are
  separated by a distance $\sim A^{-1}$ such that $\#\Omega_A\ls A^2$.
\end{enumerate}

Related to this we define the function
\[\alpha(j_1,j_2)=\inf\{|\angle(\pm x, y)|: x\in \omega_A^{j_1},y\in
\omega_A^{j_2}\}
\]
which measures the minimal angle between any two straight lines
through the caps $\omega_A^{j_1}$ and $\omega_A^{j_2}$, respectively.

Based on the above construction, for each $j \in \Omega_A$ we define
\begin{equation*}
  \mathfrak{Q}^{j}_A=\left\{(\xi,\tau) \in \R^3 \setminus \{ 0 \} \times \R :  \frac{\xi}{|\xi|} \in \omega_A^j \right\}.
\end{equation*}
and the corresponding localization operator
\[
\mathcal{F}(Q_A^j u)(\xi,\tau) =
\frac{\chi_{\omega_A^j}(\frac{\xi}{|\xi|} )}{\chi(\frac{\xi}{|\xi|} )}
\mathcal{F} u(\xi,\tau).
\]

For $k,\ell \in \R$ and $T>0$ we define the space
$\mathbf{Z}^{k,\ell}_T$ as the Banach space of all pairs of space-time
distributions $(u,n)$ which satisfy
\begin{equation}
  \label{eq:x-def}
  \begin{split}
    u &\in C( [0,T]; H^k(\mathbb{R}^3;\C)),\\
    n &\in C( [0,T]; H^\ell(\mathbb{R}^3;\R)) \cap C^1( [0,T];
    H^{\ell-1}(\mathbb{R}^3;\R)),
  \end{split}
\end{equation}
endowed with the standard norm $\|\cdot \|_{\mathbf{Z}^{k,\ell}_T}$
defined as
\begin{equation}\label{eq:x-norm}
  \|(u,n)\|^2_{\mathbf{Z}^{k,\ell}_T} = \sup_{t \in [0,T]}\Big\{
  \|u(t)\|^2_{H_x^k} + \|n(t)\|^2_{H_x^{\ell}} +  \|\partial_t n(t)\|^2_{H_x^{\ell-1}}\Big \}.
\end{equation}

Let $\sigma,b\in \R$, $1\leq p< \infty$. In connection to the operator
$i\partial_t + \Delta$ we define the Bourgain space $X^S_{\sigma,b,p}$
of all $u \in \mathcal{S}'(\R^3\times \R)$ for which the norm
\[
\|u\|_{X^S_{\sigma,b,p}}=\left(\sum_{N\geq
    1}N^{2\sigma}\left(\sum_{L\geq 1}L^{pb}
    \|S_LP_Nu\|_{L^2}^p\right)^{\frac2p}\right)^{\frac12}
\]
is finite. Similarly, to the half-wave operators $i\partial_t \pm
\langle \nabla\rangle$ we associate the Bourgain spaces
$X^{W^\pm}_{\sigma,b,p}$ of all $v \in \mathcal{S}'(\R^3\times \R)$
for which the norm
\[
\|v\|_{X^{W^\pm}_{\sigma,b,p}} = \left(\sum_{N\geq
    1}N^{2\sigma}\left(\sum_{L \geq 1}L^{pb}
    \|W^\pm_LP_Nu\|_{L^2}^p\right)^{\frac2p}\right)^{\frac12}
\]
is finite. For $p=\infty$ we modify the definition as usual.  In cases
where the Schwartz space $\mathcal{S}(\R^3\times \R)$ is not dense in
$X^{W^\pm}_{\sigma,b,p}$ or $X^{S}_{\sigma,b,p}$, respectively, we
redefine the spaces and take the closure of $\mathcal{S}(\R^3\times
\R)$ instead.

For a normed space $B\subset \mathcal{S}'(\R^n\times \R;\C)$ of
space-time distributions we denote by $\overline{B}$ the space of
complex conjugates with the induced norm.

For $T>0$ we define the space $B(T)$ of restrictions of distributions
in $B$ to the set $\R^n\times (0,T)$ with the induced norm
\[
\|u\|_{B(T)}=\inf \{ \| \tilde{u} \|_{B} : \; \tilde{u} \in B \text{
  is an extension of } u \text{ to }\R^n \times \R\}.
\]

\subsection{Multilinear estimates}\label{subsect:mult-est}
\noindent
This section is devoted to the proof of the crucial multilinear
estimates which imply the well-posedness result for the Zakharov
system in Theorem \ref{thm:main-zak}. The detailed reduction to multilinear
estimates as explained in \cite[Section 3]{bejenaru_zakharov_2009}
remains true verbatim, cf. also \cite{ginibre_cauchy_1997}, so we do
not reproduce it here. Given these multilinear estimates, Theorem
\ref{thm:main-zak} can be deduced by the standard Picard iteration
argument as described in \cite[Section 5]{bejenaru_zakharov_2009} for
the $2d$ case. Therefore, in the sequel we will focus on the proof of the following:

\begin{theorem}\label{thm:tri} Assume that $s > 0, \sigma > -\frac12,
  \sigma \leq s \leq \sigma+1$, $\sigma - 2s < -\frac12$.
  \begin{enumerate}\item\label{it:tri_a} For all $0<T\leq 1$ and for
    all functions $u,u_1,u_2 \in X^S_{s,\frac12,1}(T)$ and $v\in
    X^{W+}_{\sigma,\frac{1}{2},1}(T)$ the following estimates hold
    true:
    \begin{align}
      \| uv\|_{X^S_{s,-\frac12,1}(T)} \ls{} & \|u\|_{X^S_{s,\frac12,1}(T)}\|v\|_{X^{W+}_{\sigma,\frac12,1}(T)}\label{eq:sws1},\\
      \| u\bar{v}\|_{X^S_{s,-\frac12,1}(T)} \ls{} & \|u\|_{X^S_{s,\frac12,1}(T)} \|v\|_{X^{W+}_{\sigma,\frac12,1}(T)} \label{eq:sws2},\\
      \left\| \frac{\Delta}{\langle\nabla\rangle} ( u_1
        \bar{u}_2)\right\|_{X^{W+}_{\sigma,-\frac12,1}(T)} \ls{} &
      \|u_1\|_{X^S_{s,\frac12,1}(T)}
      \|u_2\|_{X^S_{s,\frac12,1}(T)}.\label{eq:wss}
    \end{align}
  \item\label{it:tri_b} There exists $\theta = \theta(s,\sigma) > 0$
    in the above regime for $s,\sigma$ such that all the inequalities
    can be improved with a factor of $T^\theta$ on the right hand
    side.
  \end{enumerate}
\end{theorem}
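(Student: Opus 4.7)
By duality and the standard paraproduct decomposition in frequency and modulation, each of the three estimates \eqref{eq:sws1}--\eqref{eq:wss} is equivalent to a trilinear bound of the form
\begin{equation*}
\left| \int F_1(\Xi_1) F_2(\Xi_2) F_3(\Xi_3)\, \delta(\Xi_1 + \Xi_2 - \Xi_3)\, d\Xi_1\, d\Xi_2\, d\Xi_3 \right| \ls \mathcal{C}(N_i, L_i) \prod_{i=1}^3 \|F_i\|_{L^2(\R^4)},
\end{equation*}
where each $F_i$ is supported, in $\R^4$, in an $L_i$-thickened neighborhood of either the Schr\"odinger paraboloid $\{\tau + |\xi|^2 = 0\}$ or the wave cone $\{\tau \pm |\xi| = 0\}$, intersected with the spatial frequency annulus $|\xi_i|\sim N_i$. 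After dividing out the dyadic weights $N_i^{s_i}L_i^{1/2}$ built into the $X^{s,1/2,1}$-norms (and their duals), the constant $\mathcal{C}(N_i, L_i)$ must be summable in $L_i$ by $\ell^1$ and in $N_i$ by $\ell^2$, separately in each interaction regime.

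The plan is to combine the classical resonance analysis for the Zakharov system with the trilinear convolution estimate of Corollary \ref{cor:conv_theta}. For each dyadic configuration, the resonance identity for the sum of the three phase functions (in the Schr\"odinger--wave--Schr\"odinger case, $\mu_1+\mu_2-\mu_3 = |\xi_2|-|\xi_2|^2-2\xi_1\cdot\xi_2$, and analogously in the other cases) forces $L_{\max}:=\max_i L_i$ to dominate an explicit frequency-dependent quantity that measures the angular separation between the relevant frequency vectors. I then perform an additional angular decomposition with the operators $Q_A^j$ at a scale $A^{-1}$ adapted to this resonance quantity, together with a radial subdivision of the spatial annulus for one of the factors. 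A Fubini decomposition into $L^2$-superpositions of normal translates produces the factor $L_i^{1/2}$ from each thickened slab. In each resulting small piece the trio of underlying submanifolds satisfies Assumption \ref{assumpt_main} in $\R^4$ with codimensions summing to $n=4$ --- typically in the form $(2,1,1)$, the codimension-$2$ surface coming from the simultaneous modulation-and-radial localization of one factor --- and with transversality $\theta$ bounded from below by the resonance lower bound. Corollary \ref{cor:conv_theta} then yields $\mathcal{C}(N_i,L_i)\ls \bigl(\prod_i L_i^{1/2}\bigr)\cdot \theta^{-1/2}$, with the remaining geometric loss absorbed into $\theta^{-1/2}$.

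The main obstacle will be the dyadic summation in the critical corner of the parameter region. Each of the four hypotheses $s>0$, $\sigma>-1/2$, $\sigma\leq s\leq \sigma+1$, and $\sigma-2s<-1/2$ corresponds to the marginal summability of a specific frequency regime (high--high-to-low, low--high-to-high, and so on), and the sharp power $\theta^{-1/2}$ in Corollary \ref{cor:conv_theta} is precisely what is needed to close the borderline cases --- this is the new input beyond the Strichartz-based arguments used previously, which missed the endpoint line $\sigma=-1/2$. The case analysis will parallel the one carried out for the 2D Zakharov system in \cite{bejenaru_zakharov_2009}; the novelty here is that the present paper's extension of the convolution estimate to arbitrary codimensions enables this scheme to work in $\R^4$.

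For the $T^\theta$ improvement in part \ref{it:tri_b}, every exponent condition in part \ref{it:tri_a} is an \emph{open} strict inequality, so the bilinear bounds continue to hold with the input modulation exponent $b=1/2$ replaced by $b=1/2+\delta$ for some small $\delta>0$. The standard time-localized Bourgain space embedding then converts this extra modulation regularity into a positive power of $T$, giving the required factor $T^{\theta(s,\sigma)}$.
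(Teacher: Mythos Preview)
Your outline for part \ref{it:tri_a} is broadly aligned with the paper's approach: duality to a trilinear form, angular decomposition via the $Q_A^j$ operators, foliation of one surface to reach codimensions $(2,1,1)$ in $\R^4$, and then Corollary \ref{cor:conv_theta} in the transverse regime, with the remaining frequency/modulation regimes handled by the case analysis from \cite{bejenaru_zakharov_2009}. Two points where your sketch is imprecise but not fatal: the foliation in the paper is of the \emph{wave} surface by hyperplanes $\{\xi\cdot v=c\}$ with $v$ chosen nearly orthogonal to both Schr\"odinger frequency sectors (not a radial cut), and the convolution estimate is used \emph{only} in the transverse high--high low-modulation block; the parallel high--high case, the high-modulation case, and the low--high case are handled instead by bilinear Strichartz estimates and direct volume/orthogonality bounds. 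You should make those distinctions explicit.

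Your argument for part \ref{it:tri_b}, however, has the sign of the $b$-shift reversed and as written does not produce any power of $T$. Replacing the input exponent $b=\tfrac12$ by $b=\tfrac12+\delta$ makes the right-hand side \emph{larger} (a stronger norm), so the resulting inequality is weaker, not stronger; and your inputs $u,u_1,u_2,v$ live only in $X_{\cdot,\frac12,1}(T)$, so you cannot feed them into an estimate that demands $X_{\cdot,\frac12+\delta}$. There is no time-localized embedding of the form $\|f\|_{X_{\cdot,\frac12+\delta}(T)}\ls T^{\theta}\|f\|_{X_{\cdot,\frac12,1}(T)}$. The correct mechanism is the opposite one used in the paper: because all the exponent conditions are strict, the trilinear bound in fact holds with \emph{one} input measured in $X_{\cdot,b,\infty}$ for some $b<\tfrac12$, and then the standard estimate
\[
\|f\|_{X_{\cdot,b,1}(T)}\ \ls\ T^{\frac12-b}\,\|f\|_{X_{\cdot,\frac12,1}(T)},\qquad 0\le b<\tfrac12,
\]
converts the slack into the factor $T^{\theta}$. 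Without this correction your part \ref{it:tri_b} does not close.
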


We have split the above result in two parts for the following reason.
Part \ref{it:tri_a} contains the "clean" estimates without keeping
track of the gains of powers of $T$ which may distract the reader from
the main ideas.  However, from part \ref{it:tri_a}, we would be able
to claim only a small data result for the Zakharov system.  It is part
\ref{it:tri_b} that allows us to claim the local well-posedness result
for large data.

We introduce the notation
\[
I(f,g_1,g_2) = \int f(\zeta_1-\zeta_2) g_1(\zeta_1) g_2(\zeta_2)
d\zeta_1 d\zeta_2,
\]
where $\zeta_i=(\xi_i,\tau_i)$, $i=1,2$.  Using duality and the fact
that $\overline{\mathcal{F}u}=\mathcal{F}\overline{u}(-\cdot)$, we can
reduce Theorem \ref{thm:tri} to the following trilinear estimates:
\begin{proposition}\label{prop:trilinear} Assume that $s > 0, \sigma >
  -\frac12, \sigma \leq s \leq \sigma+1$, $\sigma - 2s < -\frac12$.
  \begin{enumerate}
  \item\label{it:trilinear_a} For all $v,u_1,u_2 \in
    \mathcal{S}(\R^3\times \R)$ it holds
    \begin{align}\label{eq:trilinear1}
      \left| I(\mathcal{F}v, \mathcal{F}u_1, \mathcal{F}u_2) \right|
      &\ls{} \|u_1\|_{X^{S}_{-s,\frac12,\infty}}\|u_2\|_{X^{S}_{s,\frac12,\infty}}\|v\|_{X^{W\pm}_{\sigma,\frac12,\infty}},\\
      \label{eq:trilinear2}
      \left| I(\mathcal{F}v, \mathcal{F}u_1, \mathcal{F}u_2) \right| &
      \ls{} \|u_1\|_{X^{S}_{s,\frac12,\infty}}
      \|u_2\|_{X^{S}_{s,\frac12,\infty}}\|v\|_{X^{W\pm}_{-1-\sigma,\frac12,\infty}}.
    \end{align}
  \item\label{it:trilinear_b} There exists $b = b(s,\sigma)
    < \frac12$ in the above regime for $s,\sigma$ such that the above
    inequalities hold true with $\|u_2\|_{X^{S}_{s,b,\infty}}$ instead.
  \end{enumerate}
\end{proposition}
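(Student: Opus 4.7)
The plan is to reduce Proposition \ref{prop:trilinear} to a collection of dyadic $L^2$-convolution bounds on pieces of the Schr\"odinger paraboloid and the wave cone, and then invoke Corollary \ref{cor:conv_theta}. First I would decompose each Fourier transform dyadically in frequency ($|\xi_i|\sim N_i$ and $|\xi_1-\xi_2|\sim N_3$) and in modulation ($|\tau_i+|\xi_i|^2|\sim L_i$ for the Schr\"odinger factors and $|\tau_3\pm|\xi_3||\sim L_3$ for the wave factor). On the support constraint $\zeta_3=\zeta_1-\zeta_2$, the resonance identity
\[
 (\tau_1+|\xi_1|^2)-(\tau_2+|\xi_2|^2)-(\tau_3\pm|\xi_3|) \;=\; |\xi_1|^2-|\xi_2|^2 \mp |\xi_1-\xi_2|
\]
forces $\max(L_1,L_2,L_3)$ to dominate the right-hand side, providing the gain that ultimately closes the dyadic sum.

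Pulling out the largest-modulation factor via Cauchy--Schwarz, each dyadic block of $|I(\mathcal{F}v,\mathcal{F}u_1,\mathcal{F}u_2)|$ is bounded by an $L^2$-norm times a convolution bound of the form
\[
 \|(F_1\,d\mu_1)\ast(F_2\,d\mu_2)\|_{L^2(\Sigma_3)} \ls C\,\|F_1\|_{L^2}\|F_2\|_{L^2},
\]
or a permutation thereof, where $\Sigma_1,\Sigma_2$ are $L_i$-thickened patches of the paraboloid and $\Sigma_3$ a thickened patch of the wave cone. After an affine rescaling to unit scale and extraction of the modulation thickness, Corollary \ref{cor:conv_theta} reduces the entire problem to a uniform lower bound on the transversality parameter $\theta$.

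The frequency configurations split into two regimes. In the \emph{high-low} case $N_3\sim\max(N_1,N_2)\gg\min(N_1,N_2)$, paraboloid and cone meet at well-separated frequencies, the normal spaces are uniformly transversal, and Corollary \ref{cor:conv_theta} applies directly. The \emph{high-high to low} case $N_1\sim N_2\gtrsim N_3$ is delicate: as $|\xi_1-\xi_2|$ shrinks, the paraboloid normals at $\xi_1$ and $\xi_2$ become nearly parallel and $\theta$ degenerates. I would therefore perform an additional angular decomposition $u_i=\sum_j Q_A^j u_i$, with aperture $A^{-1}$ dictated by the resonance identity, so that within each admissible triple of caps an appropriate affine rescaling turns the dyadic patches into unit-size submanifolds satisfying Assumption \ref{assumpt_main} with $\theta\sim 1$ and a controlled $R^\beta b$. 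Summing over compatible cap triples by almost-orthogonality, and then over the dyadic frequency and modulation parameters weighted by the resonance lower bound on $\max L_i$, yields \eqref{eq:trilinear1}--\eqref{eq:trilinear2} precisely in the hypothesized range $s>0$, $\sigma>-\tfrac12$, $\sigma\leq s\leq\sigma+1$, $\sigma-2s<-\tfrac12$.

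For part \ref{it:trilinear_b} I would exploit the slack left by the strict inequalities in the hypothesis: each dyadic bound carries a small power margin in at least one parameter, which, by choosing $b<\tfrac12$ close to $\tfrac12$ in $\|u_2\|_{X^{S}_{s,b,\infty}}$ together with the usual $X^{s,b}$ time-cutoff mechanism, converts into a factor $T^{\theta(s,\sigma)}$ with $\theta>0$, as in \cite{bejenaru_zakharov_2009}. The main obstacle will be the high-high-to-low configuration: pinning down the correct angular aperture and verifying that, after rescaling, the three patches satisfy the transversality hypothesis of Corollary \ref{cor:conv_theta} uniformly across dyadic scales is the heart of the matter, and it is precisely here that the higher-dimensional convolution estimate established in the first part of this paper is indispensable.
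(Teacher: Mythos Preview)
Your overall architecture---dyadic decomposition, resonance identity, angular decomposition in the high-high-to-low regime, and the use of Corollary~\ref{cor:conv_theta} for the transverse case---matches the paper, and you correctly identify the transverse high-high-to-low, low-modulation block as the place where the new convolution estimate is indispensable. However, two genuine gaps remain.

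First, your treatment of the high-low regime $N_1\ll N_2\sim N$ is not correct. The claim that ``the normal spaces are uniformly transversal'' fails: after foliating the cone by hyperplanes $\{\xi\cdot v=c\}$, the four normals are (schematically) $(-2\xi_1,1)$, $(2\xi_2,1)\approx(\xi_2/|\xi_2|,0)$, $(\xi/|\xi|,\mp 1)/\sqrt2\approx(-\xi_2/|\xi_2|,\mp1)/\sqrt2$, and $(v,0)$. The span of the last three already contains $e_\tau$, so transversality is governed by the spatial component of $\xi_1$ orthogonal to $\operatorname{span}\{\xi_2/|\xi_2|,v\}$---which vanishes when $\xi_1\parallel\xi_2$, no matter how $v$ is chosen. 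The paper does \emph{not} use Corollary~\ref{cor:conv_theta} here; it uses the bilinear Strichartz estimates of Proposition~\ref{prop:bilinear-str} (together with the resonance lower bound $\max L\gtrsim N_2^2$) to obtain the factor $N_1 N_2^{-1/2}$ in Proposition~\ref{prop:high-low}. You would need to replace the convolution estimate by this tool in the high-low case.

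Second, within the high-high-to-low regime your angular decomposition cannot stop at caps with $\theta\sim1$ after rescaling: at the finest scale $A\sim N_1$ (nearly parallel $\xi_1,\xi_2$) the transversality parameter degenerates like $A^{-1}\sim N_1^{-1}$, and Corollary~\ref{cor:conv_theta} no longer gives a useful bound. The paper handles this endpoint separately by a pure orthogonality and volume argument (Proposition~\ref{prop:paral_hh}), and also treats the high-modulation block $\max(L,L_1,L_2)\gtrsim N_1^2$ via bilinear Strichartz (Proposition~\ref{prop:hhl_hm}) rather than the convolution estimate. Finally, the trivial case $N\lesssim 1$ (Proposition~\ref{prop:very_small_wave_freq}) is dispatched by elementary volume bounds. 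In short, Corollary~\ref{cor:conv_theta} is the decisive new ingredient, but only for Proposition~\ref{prop:trans_low_mod}; the remaining blocks require the separate, more elementary tools above.
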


Obviously part i) in Theorem \ref{thm:tri} follows from part i) in the Proposition.  
Part ii) in Theorem \ref{thm:tri} follows from part ii) of the Proposition and the 
following estimate
\begin{equation} \label{gain} \| f \|_{X_{s,b,1}(T)} \ls T^{\frac12-b}
  \| f \|_{X_{s,\frac12,1}(T)}
\end{equation}
whenever $0 \leq b < \frac12$.  Here
$X_{s,b,1}(T)$ stands both for $X_{s,b,1}^S(T)$ and $X_{s,b,1}^{W
  \pm}(T)$.  A proof of \eqref{gain} can be found in
\cite[Section 5]{bejenaru_zakharov_2009}.

The proof of Proposition \ref{prop:trilinear} is given at the end of
this section.  As building blocks we provide a number of preliminary
estimates first. These are concerned with functions which are
dyadically localized in frequency and modulation. In some cases we
additionally differentiate frequencies by their angular separation.

We start this analysis by recalling the well-known bilinear
generalization of the linear $L^4$ Strichartz estimate for the
Schr\"odinger equation in dimension $2$ which is essentially due to
Bourgain \cite[Lemma 111]{bourgain_refinements_1998}. We observe that
a similar estimate is true for a Wave-Schr\"odinger interaction.
\begin{proposition}[Bilinear Strichartz estimates]
  \
  \begin{enumerate}
  \item Let $u_1,u_2\in L^2(\R^4)$ be dyadically Fourier-localized
    such that
    \[
    \supp \mathcal{F} u_i\subset \mathfrak{P}_{N_i}\cap
    \mathfrak{S}_{L_i}
    \]
    for $L_1,L_2\geq 1$, $N_1,N_2\geq 1$. Then the following estimate
    holds:
    \begin{equation}
      \|u_1u_2\|_{L^2(\R^4)}\ls  N_1 N_2^{-\frac12}L_1^\frac12L_2^\frac12\|u_1\|_{L^2}\|u_2\|_{L^2}.\label{eq:str-schr-schr}
    \end{equation}
  \item Let $u,v\in L^2(\R^4)$ be such that
    \[
    \supp \mathcal{F} v \subset C\times \R \cap
    \mathfrak{W}^\pm_{L},\; \supp \mathcal{F} u \subset
    \mathfrak{P}_{N_1}\cap \mathfrak{S}_{L_1}
    \]
    for $L,L_1\geq 1$, $N_1\geq 1$ and a cube $C\subset \R^3$ of
    sidelength $d\geq 1$.  Then the following estimate holds:
    \begin{equation}
      \|uv\|_{L^2(\R^4)}\ls
      \min\{d,N_1\} N_1^{-\frac12}L^\frac12
      L_1^\frac12 \|u\|_{L^2}\|v\|_{L^2}.
      \label{eq:str-wave-schr-gen}
    \end{equation}
    In particular, if
    \[
    \supp \mathcal{F} v \subset \mathfrak{P}_{N} \cap
    \mathfrak{W}^\pm_{L}, \; \supp \mathcal{F} u \subset
    \mathfrak{P}_{N_1}\cap \mathfrak{S}_{L_1}
    \]
    for $L,L_1\geq 1$, $N,N_1\geq 1$, it follows
    \begin{equation}
      \|uv\|_{L^2(\R^4)}\ls \min\{N,N_1\} N_1^{-\frac12}L^\frac12 L_1^\frac12 \|u\|_{L^2}\|v_1\|_{L^2}.\label{eq:str-wave-schr}
    \end{equation}
  \end{enumerate}
  On the left hand side of \eqref{eq:str-schr-schr},
  \eqref{eq:str-wave-schr} and \eqref{eq:str-schr-schr} we may replace
  each function with its complex conjugate.
  \label{prop:bilinear-str} \end{proposition}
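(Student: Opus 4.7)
\emph{Plan of proof.} The strategy is a modulation decomposition reducing each bilinear bound to a free bilinear Strichartz inequality, which is then established by Plancherel combined with a level-set measure estimate. For \eqref{eq:str-schr-schr}, parameterize the paraboloid-localized input by writing $\tau_i=-|\xi_i|^2+s_i$ with $|s_i|\sim L_i$; this presents
\[
u_i(x,t)=\int_{|s_i|\sim L_i}e^{its_i}\,[e^{it\Delta}f_{i,s_i}](x)\,ds_i, \qquad \widehat{f_{i,s_i}}(\xi)=\mathcal{F}u_i(\xi,-|\xi|^2+s_i).
\]
Minkowski in $(s_1,s_2)$, Cauchy--Schwarz, and the Fubini identity $\int\|f_{i,s}\|_{L^2(\R^3)}^2\,ds\sim\|u_i\|_{L^2}^2$ reduce the bound to the free 3D bilinear Strichartz estimate
\[
\|e^{it\Delta}f_1\cdot e^{it\Delta}f_2\|_{L^2_{t,x}}\ls N_1 N_2^{-1/2}\|f_1\|_{L^2}\|f_2\|_{L^2},\qquad \widehat{f_i}\subset\mathfrak{P}_{N_i},
\]
assuming $N_1\leq N_2$ (WLOG by symmetry), with the factors $L_i^{1/2}$ produced by the $(s_1,s_2)$-integration.

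The free estimate above is Bourgain's classical result. By Plancherel and Cauchy--Schwarz on the convolution defining $\mathcal{F}(e^{it\Delta}f_1\cdot e^{it\Delta}f_2)$, it is equivalent to the uniform bound $|E(\xi,\tau)|\ls N_1^2/N_2$ for
\[
E(\xi,\tau)=\{\xi_1\in\R^3:|\xi_1|\sim N_1,\ |\xi-\xi_1|\sim N_2,\ \big||\xi_1|^2+|\xi-\xi_1|^2-\tau\big|\ls 1\}.
\]
The gradient $\nabla_{\xi_1}\bigl(|\xi_1|^2+|\xi-\xi_1|^2\bigr)=2(2\xi_1-\xi)$ has magnitude $\sim N_2$ on the support, so $E$ lies in the intersection of a ball of radius $N_1$ (from $|\xi_1|\sim N_1$) with a slab of thickness $\sim N_2^{-1}$, giving the required measure bound.

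The wave--Schr\"odinger estimate \eqref{eq:str-wave-schr-gen} is proved by exactly the same scheme, decomposing $v$ along the cone via $\tau=\mp|\xi|+s$ with $|s|\sim L$. The analogous level set is
\[
\bigl\{\xi_1\in C:\,|\xi-\xi_1|\sim N_1,\ \big||\xi-\xi_1|^2\pm|\xi_1|-\tau\big|\ls 1\bigr\},
\]
for which the Schr\"odinger term dominates the gradient and gives slab thickness $\sim N_1^{-1}$; intersecting the cube $C$ of sidelength $d$ with the ball of radius $\sim N_1$ around $\xi$ (from $|\xi-\xi_1|\sim N_1$) and with the slab yields measure $\ls\min\{d,N_1\}^2/N_1$, hence the constant $\min\{d,N_1\}\,N_1^{-1/2}$. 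Specialization to $d\sim N$ gives \eqref{eq:str-wave-schr}. Invariance under complex conjugation of any factor is immediate from $\widehat{\bar u}(\xi,\tau)=\overline{\widehat u(-\xi,-\tau)}$, which merely reflects the Fourier support while preserving all frequency and modulation localizations.

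The only delicate step is the level-set geometry in the wave--Schr\"odinger interaction: the cube side $d$ competes with the Schr\"odinger frequency $N_1$, and the min-type form of the constant is precisely what will allow the subsequent angular decomposition to extract further gains when the wave and Schr\"odinger inputs are nearly collinear.
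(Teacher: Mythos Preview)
Your argument is correct and follows essentially the same route as the paper: Cauchy--Schwarz on the Fourier side together with a measure bound for the level set of the resonance function, using that the Schr\"odinger contribution to the $\xi_1$-gradient has size $\sim N_1$ (resp.\ $\sim N_2$) to obtain the slab thickness. The only cosmetic difference is that you first peel off the modulations via the transfer principle (writing $\tau=\mp|\xi|+s$, then Minkowski and Cauchy--Schwarz in $s$) before doing a three-dimensional level-set estimate, whereas the paper works directly with the four-dimensional set $E(\xi,\tau)$, integrating out $\tau_1$ by Fubini to produce the factor $\min\{L,L_1\}$ and leaving $\max\{L,L_1\}$ in the spatial constraint, after which the identical gradient argument is applied; for part~(i) the paper simply cites Bourgain and Colliander--Keel--Staffilani--Takaoka--Tao rather than reproving it.
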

\begin{proof}
  As remarked above the estimate \eqref{eq:str-schr-schr} is due to
  Bourgain \cite[Lemma 111]{bourgain_refinements_1998} for two
  dimensions and has been generalized in \cite[Lemma
  3.4]{colliander_global_2007} to higher dimensions\footnote{note that
    the proof of \cite[Lemma 3.4]{colliander_global_2007} applies with
    $\delta=0$ for functions which are dyadically
    Fourier-localized}. It remains to show
  \eqref{eq:str-wave-schr-gen} and \eqref{eq:str-wave-schr}. With
  $f=\mathcal{F}v$ and $g=\mathcal{F}u$ it follows
  \begin{equation*}\left\|\int
      f(\xi_1,\tau_1)g(\xi-\xi_1,\tau-\tau_1)d\xi_1d\tau_1
    \right\|_{L^2_{\xi,\tau}}
    \ls \sup_{\xi,\tau}|E(\xi,\tau)|^\frac12 \|f\|_{L^2}\|g\|_{L^2}
  \end{equation*}
  by the Cauchy-Schwarz inequality, where
  \begin{equation*}
    E(\xi,\tau)=\{(\xi_1,\tau_1)\in \supp f \mid (\xi-\xi_1,\tau-\tau_1)\in \supp g \}\subset \R^4.
  \end{equation*}
  With $\underline{l}=\min\{L,L_1\}$ and $\overline{l}=\max\{L,L_1\}$
  the volume of this set can be estimated as
  \begin{equation*}
    |E(\xi,\tau)|
    \leq  \underline{l} \cdot
    | \{\xi_1\mid | \tau\pm |\xi_1|+|\xi-\xi_1|^2|\ls \overline{l}, \xi_1\in C,|\xi-\xi_1|\sim N_1\}|,
  \end{equation*}
  by Fubini's theorem.  The latter subset of $\R^3$ is contained in a
  cube of sidelength $m$, where $m \sim \min\{d,N_1\}$, so if $N_1=1$
  the estimate follows. If $N_1\geq 2$ and one component $\xi_{1,i}, i
  \in \{1,2,3\}$ is fixed, then the other two components $\xi_{1,j}, j
  \ne i$ are confined to an interval of length $m$. For each $i \in
  \{1,2,3\}$, we notice that in the subset where $|(\xi-\xi_{1})_i|\gs
  N_1$ we have that
  $|\partial_{\xi_{1,i}}(\tau\pm|\xi_1|+|\xi-\xi_1|^2)|\gs N_1$. This
  shows that
  \[
  | \{\xi_1\mid | \tau\pm |\xi_1|+|\xi-\xi_1|^2|\ls \overline{l},
  \xi_1 \in C,|\xi-\xi_1|\sim N_1\}|\ls N_1^{-1}\overline{l}m^2,
  \]
  and the claim \eqref{eq:str-wave-schr-gen} follows. This also
  implies the claim \eqref{eq:str-wave-schr} because the dyadic
  annulus of radius $N$ is contained in a cube of sidelength $d\sim
  N$.
\end{proof}

\begin{proposition}[Transverse high-high interactions, low
  modulation]\label{prop:trans_low_mod}
  Let $f,g_1,g_2\in L^2$ with $\| f \|_{L^2} =\| g_1\|_{L^2}=
  \|g_2\|_{L^2}=1$ and
  \begin{equation*}
    \supp(f)\subset \mathfrak{P}_{N}\cap \mathfrak{W}^\pm_L,\quad 
    \supp(g_k)\subset \mathfrak{Q}_A^{j_k} \cap \mathfrak{P}_{N_k} \cap \mathfrak{S}_{L_k}\quad (k=1,2).
  \end{equation*}
  where the frequencies $N,N_1,N_2$ and modulations $L,L_1,L_2$
  satisfy
  \[
  1 \ll N \ls N_1 \sim N_2, \qquad L_1,L_2,L \ls N_1^2
  \]
  while the angular localization parameters $A$ and $j_1, j_2 \in
  \Omega_A$ satisfy
  \[
  1\leq A \ll N_1, \qquad \alpha(j_1,j_2)\sim A^{-1}.
  \]
  Then the following estimate holds
  \begin{equation} \label{eq:trans_low_mod} |I(f,g_1,g_2)| \ls
    N_1^{-\frac12} (L_1 L_2 L)^\frac12.
  \end{equation}
\end{proposition}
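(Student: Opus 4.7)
The plan is to prove \eqref{eq:trans_low_mod} by combining modulation slicing with the convolution estimate of Corollary \ref{cor:conv_theta}, following the template developed in \cite{bejenaru_zakharov_2009} for the two-dimensional case.

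First, I slice each function in the direction transverse to its characteristic surface. Writing $f = \int f^s\, ds$ with $f^s$ supported on the shifted wave cone $\{\tau \pm |\xi| = s\}$ and $g_k = \int g_k^{s_k}\, ds_k$ with $g_k^{s_k}$ on the shifted Schr\"odinger paraboloid $\{\tau+|\xi|^2=s_k\}$, the modulation-direction orthogonality gives $\|f\|_{L^2(\R^4)}^2 \sim \int \|f^s\|^2_{L^2(\Sigma_3^s)}\, ds$, and analogously for $g_k$. Substituting into $I(f,g_1,g_2)$ and eliminating the $\tau$-integrations via the Dirac delta, the problem reduces to estimating, for each fixed $(s,s_1,s_2)$, a convolution pairing
\[
I^{s,s_1,s_2} = \int f^s(\xi_1-\xi_2)\, g_1^{s_1}(\xi_1)\, g_2^{s_2}(\xi_2)\, \delta(\Phi(\xi_1,\xi_2)-C)\, d\xi_1\, d\xi_2
\]
on singular measures supported on the appropriate submanifolds, where $\Phi(\xi_1,\xi_2) = |\xi_1|^2-|\xi_2|^2 \mp |\xi_1-\xi_2|$ encodes the modulation matching and $C = C(s,s_1,s_2)$.

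Next, for each fixed $(s,s_1,s_2)$, I would recast $I^{s,s_1,s_2}$ as the trilinear form of Assumption \ref{assumpt_main} in a suitable ambient dimension, apply the rescaling of Proposition \ref{prop:conv_gen} to normalize the submanifolds to diameter $1$ and transversality $\tfrac12$, and invoke Theorem \ref{thm:conv} to conclude
\[
|I^{s,s_1,s_2}| \lesssim N_1^{-1/2}\, \|f^s\|_{L^2(\Sigma_3^s)}\,\|g_1^{s_1}\|_{L^2(\Sigma_1^{s_1})}\,\|g_2^{s_2}\|_{L^2(\Sigma_2^{s_2})}.
\]
The $N_1^{-1/2}$ factor emerges from the rescaling Jacobian together with $\theta^{-1/2}$, where the transversality $\theta$ is produced by the angular separation $\alpha(j_1,j_2)\sim A^{-1}$ of the two paraboloid caps combined with the $\tau$-component of the cone normal. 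Finally, Cauchy-Schwarz over $(s,s_1,s_2) \in [L/2, 2L] \times [L_1/2, 2L_1] \times [L_2/2, 2L_2]$ produces the factor $(LL_1L_2)^{1/2}$ and yields \eqref{eq:trans_low_mod}.

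The main obstacle is the geometric setup required to apply Corollary \ref{cor:conv_theta}. The naive choice of three hypersurfaces in $\R^4$ has codimensions summing only to $3$ rather than $n=4$, and the frequency configurations $\xi_1,\xi_2,\xi_1-\xi_2$ always lie in a common two-plane of $\R^3$, so that a purely spatial determinant of normals degenerates. Both issues must be resolved simultaneously, for example by additionally slicing $f$ in the radial variable $r=|\xi_3|\sim N$ and by tracking the $\tau$-direction contribution of the cone and sphere normals in the rescaled coordinates, and the regularity, transversality, and $R^\beta b \theta^{-1}$ bookkeeping of Corollary \ref{cor:conv_theta} must be verified with enough precision to extract the sharp $N_1^{-1/2}$ gain. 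This delicate geometric analysis is the technical heart of the proof, and is precisely what the higher-dimensional extension of the convolution estimate established in Section 2 was designed to handle.
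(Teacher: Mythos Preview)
Your overall architecture is right and matches the paper: slice in modulation, reduce to a convolution of singular measures, and invoke Corollary~\ref{cor:conv_theta}. You have also correctly identified the central obstacle, namely that the three hypersurfaces in $\R^4$ have codimensions summing only to $3$, so one of them must be foliated to raise its codimension. The gap is in your proposed resolution.

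Radial slicing of $f$ by $r=|\xi_0|$ does not work. After the parabolic rescaling $(\xi,\tau)\mapsto(N_1\xi,N_1^2\tau)$ the unit normals are, up to harmless factors,
\[
\mathfrak{n}_1\sim\Big(\tfrac{\xi_1}{|\xi_1|},\ast\Big),\quad
\mathfrak{n}_2\sim\Big(\tfrac{\xi_2}{|\xi_2|},\ast\Big),\quad
\mathfrak{n}_{S_3}\sim\big(O(N_1^{-1}),-1\big),\quad
\mathfrak{n}_{\mathrm{radial}}=\Big(\tfrac{\xi_0}{|\xi_0|},0\Big).
\]
Expanding the $4\times 4$ transversality determinant along the third column, the leading term is
\[
\det\Big(\tfrac{\xi_1}{|\xi_1|},\tfrac{\xi_2}{|\xi_2|},\tfrac{\xi_0}{|\xi_0|}\Big),
\]
which vanishes identically because $\xi_0=\xi_1\pm\xi_2\in\operatorname{span}(\xi_1,\xi_2)$. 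The surviving terms are $O(N_1^{-1})$, so at best $\theta\ls N_1^{-1}$; feeding this into Corollary~\ref{cor:conv_theta} loses a full factor of $N_1^{1/2}$ and also violates the hypothesis $R^\beta b\,\theta^{-1}\ls 1$ since $R\sim A^{-1}$ and $A\ll N_1$. Tracking the $\tau$-components cannot repair this: the very coplanarity you flagged is what kills the radial choice.

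The paper resolves this by foliating $S_3$ in a direction \emph{out of} the plane of $\xi_1,\xi_2$: one picks a unit vector $v$ with
\[
\Big|\det\Big(\tfrac{\xi_1}{|\xi_1|},\tfrac{\xi_2}{|\xi_2|},v\Big)\Big|\sim\big|\sin\angle(\xi_1,\xi_2)\big|\sim A^{-1},
\]
and sets $S_3^c=S_3\cap\{\xi\cdot v=c\}$. The same determinant calculation now has leading term $\sim A^{-1}$, so Corollary~\ref{cor:conv_theta} applies with $\theta\sim A^{-1}$ and $R\sim A^{-1}$, giving the bound $A^{1/2}$ on each leaf $L^2(S_3^c)$. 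Since $c$ ranges over an interval of length $\sim A^{-1}$, integrating in $c$ cancels the $A^{1/2}$, and undoing the parabolic scaling yields the sharp $N_1^{-1/2}$.
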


\begin{proof} We abuse notation and replace $g_2$ by $g_2(-\cdot)$ and
  change variables $\zeta_2\mapsto -\zeta_2$ to obtain the usual
  convolution structure. From now on it holds $|\tau_2-|\xi_2|^2|\sim
  L_2$ within the support of $g_2$. We consider only the case
  $\supp(f)\subset \mathfrak{W}^-_L$ since in the case
  $\supp(f)\subset \mathfrak{W}^+_L$ the same arguments apply.

  For fixed $\xi_1,\xi_2$ we change variables $c_1=\tau_1+|\xi_1|^2$,
  $c_2=\tau_2-|\xi_2|^2$.  By decomposing $f$ into $\sim L$ pieces and
  applying the Cauchy-Schwarz inequality, it suffices to prove
  \begin{equation}\label{eq:reduce}
    \begin{split}
      &\left| \int g_{1}(\phi^-_{c_1}(\xi_1)) g_{2}
        (\phi^+_{c_2}(\xi_2))
        f(\phi^-_{c_1}(\xi_1)+\phi^+_{c_2}(\xi_2)) d\xi_1 d\xi_2 \right| \\
      \ls{} & N_1^{-\frac12} \| g_{1}\circ\phi^-_{c_1}\|_{L^2_\xi} \|
      g_{2}\circ \phi^+_{c_2} \|_{L^2_\xi} \| f\|_{L^2}
    \end{split}
  \end{equation}
  where $f$ is now supported in $c \leq \tau-|\xi| \leq c+1$ and
  $\phi^\pm_{c_k}(\xi)=(\xi,\pm|\xi|^2 + c_k)$, $k=1,2$, and the
  implicit constant is independent of $c, c_1,c_2$.

  We refine the localization of the $\xi$ and $\tau$ components by
  orthogonality methods.  Since the support of $f$ in the $\tau$
  direction is confined to an interval of length $\ls N_1$,
  $|\xi_2|^2-|\xi_1|^2$ is localized in an interval of length $\sim
  N_1$ which in turn localizes $|\xi_2|-|\xi_1|$ in an interval of
  size $\sim 1$. By decomposing the plane into annuli of size $\sim 1$
  and using the Cauchy-Schwarz inequality, we reduce \eqref{eq:reduce}
  further to the additional assumption that $|\xi_1|$ and $|\xi_2|$
  are localized in two intervals of length $\sim 1 \ls N_1 A^{-1}$.
  Recalling the additional angular localization, we can assume that
  $g_1,g_2$ and $f$ are each localized in cubes of size $N_1 A^{-1}$
  with respect to the $\xi$ variables.
        
  We use the parabolic scaling $(\xi,\tau)\mapsto (N_1 \xi, N_1^2
  \tau)$ to define
  \begin{equation*}
    \tilde{f}(\xi,\tau)=f(N_1 \xi,N_1^2\tau), \;
    \tilde{g}_k(\xi_k,\tau_k)=g_{k}(N_1 \xi_k,N_1^2\tau_k),\; k=1,2.
  \end{equation*}
  If we set $\tilde{c}_k=c_kN_k^{-2}$, the estimate \eqref{eq:reduce}
  reduces to
  \begin{equation} \label{eq:reduce1}
    \begin{split}
      &\left| \int \tilde{g}_{1}(\phi^-_{\tilde{c}_1}(\xi_1))
        \tilde{g}_{2} (\phi^+_{\tilde{c}_2}(\xi_2))
        \tilde{f}(\phi^-_{\tilde{c}_1}(\xi_1)+\phi^+_{\tilde{c}_2}(\xi_2)) d\xi_1 d\xi_2 \right| \\
      \ls{}& N_1^{-1} \| \tilde{g}_{1}\circ\phi^-_{\tilde{c}_1}
      \|_{L^2_\xi} \| \tilde{g}_{2} \circ\phi^+_{\tilde{c}_2}
      \|_{L^2_\xi} \| \tilde{f} \|_{L^2},
    \end{split}
  \end{equation}                
  where now $\tilde{g}_k$ is supported in a cube of size $\sim A^{-1}$
  with $|\xi_1|,|\xi_2| \sim 1$ and the supports are separated by
  $\sim A^{-1}$.  $\tilde{f}$ is supported in a neighborhood of size
  $N^{-2}_1$ of the submanifold $S_3$ parametrized by
  $(\xi,\psi_{N_1}(\xi))$ for $\psi_{N_1}(\xi)=\frac{|\xi|}{N_1} +
  \frac{c}{N^2_1}$.  Let us put $\varepsilon=N_1^{-2}$ and denote this
  neighborhood by $S_3(\varepsilon)$.  The separation of $\xi_1$ and
  $\xi_2$ above implies also that in the support of $\tilde{f}$ we
  have $|\xi| \gs A^{-1} \geq N_1^{-1}$.

  By density and duality it is enough to consider continuous $\tilde
  g_1,\tilde g_2$ and we can further rewrite the above estimate as
  \begin{equation} \label{submanifold} \| \tilde g_1|_{S_1} \ast
    \tilde g_2|_{S_2} \|_{L^2(S_3(\varepsilon))} \ls
    \varepsilon^\frac12 \| \tilde g_1 \|_{L^2(S_1)} \| \tilde g_2
    \|_{L^2(S_2)}
  \end{equation}
  where $S_i$, $i=1,2$ are parametrized by $\phi^\pm_{\tilde c_i}$.
  The above localization properties of the support of $\tilde{g}_i$
  are inherited by $S_i$, which implies that the maximal diameter of
  the $S_1$, $S_2$ and $S_3$ is at most $R\sim A^{-1}$.
  
  It is a straightforward to check that $S_1$, $S_2$ and $S_3$ verify
  Part \ref{it:reg_cond} of Assumption \ref{assumpt_main} with
  $\beta=1$ and $b \sim 1$.
 
  We now turn our attention to the transversality condition, i.e. Part
  \ref{it:trans} of Assumption \ref{assumpt_main}.  Since
  $\alpha(j_1,j_2)\sim A^{-1}$, there exists a unit vector $v$ which
  is almost orthogonal to any $\xi_1 \in \mathfrak{Q}_A^{j_1}$ and any
  $\xi_2 \in \mathfrak{Q}_A^{j_2}$ in the following sense
  \begin{equation} \label{vol} |\det
    \left(\frac{\xi_1}{|\xi_1|},\frac{\xi_2}{|\xi_2|}, v
    \right)|=\mathrm{vol}
    \left(\frac{\xi_1}{|\xi_1|},\frac{\xi_2}{|\xi_2|}, v \right) \sim
    |\sin \angle{(\frac{\xi_1}{|\xi_1|},\frac{\xi_2}{|\xi_2|})}| \sim
    A^{-1}
  \end{equation}
  The codimensions of $S_1, S_2, S_3$ add up to $3$ instead of $4$.
  In order to be able to apply the results in the first part of the
  paper, we foliate one of the surfaces to increase its codimension by
  one. We do this for $S_3$ as follows:
  \[
  S_3 = \bigcup_{c \in I} S_3^c
  \]
  where $S_3^c=S_3 \cap \{ (\xi,\tau): \xi \cdot v= c \}$ and $c$
  varies in an interval $I$ of length $|I|\sim A^{-1}$. Each $S_3^c$
  retains its $C^{1,1}$ structure. In addition,
  \begin{equation} \label{integral} \| f \|_{L^2(S_3)}^2 = \int_I \| f
    \|_{L^2(S_3^c)}^2 \ dc \lesssim A^{-1} \sup_c \| f \|_{
      L^2(S_3^c)}^2
  \end{equation}
  For fixed $c\in I$, let us identify a basis of unit normals to
  $S_3^c$.  For the following calculations, we set $\xi=\xi_0$ and
  denote the components as
  \[
  \xi_k=(\xi_{k,1},\xi_{k,2},\xi_{k,3}), \, k=0,1,2.
  \]
  At each point we keep the normal to the cone
  \[
  \mathfrak{n}_{S_3} =(\frac{\xi_{0,1}}{|\xi| \langle N_1\rangle},
  \frac{\xi_{0,2}}{|\xi| \langle N_1\rangle}, \frac{\xi_{0,3}}{|\xi|
    \langle N_1\rangle}, - \frac{N_1}{ \langle N_1\rangle}).
  \]
  Another convenient normal is $\mathfrak{n}_{S_3^c}=(v,0)$.  This
  choice is simple, but it has the disadvantage that
  $\{\mathfrak{n}_{S_3}, \mathfrak{n}_{S_3^c}\}$ is not an orthonormal
  basis. On the other hand,
  \begin{equation} \label{estnor} |\mathfrak{n}_{S_3}\cdot
    \mathfrak{n}_{S_3^c}| =| v\cdot (\frac{\xi_{0,1}}{|\xi| \langle
      N_1\rangle}, \frac{\xi_{0,2}}{|\xi| \langle N_1\rangle},
    \frac{\xi_{0,3}}{|\xi| \langle N_1\rangle})| \leq \frac{1}{\langle
      N_1\rangle} \ll 1.
  \end{equation}
  Therefore, a correct orthonormal set of normals to $S_3^c$ is
  $\{\mathfrak{n}_{S_3}, \mathfrak{n}_{S_3}'\}$, with
  \[
  \mathfrak{n}_{S_3}'= \frac{\mathfrak{n}_{S_3^c}-
    (\mathfrak{n}_{S_3}\cdot \mathfrak{n}_{S_3^c})
    \mathfrak{n}_{S_3}}{|\mathfrak{n}_{S_3^c}-(\mathfrak{n}_{S_3}\cdot
    \mathfrak{n}_{S_3^c}) \mathfrak{n}_{S_3} |}.
  \]
  Now, we can analyze the transversality properties of our
  submanifolds $S_1$, $S_2$, $S_3^c$ in the sense of \eqref{eq:trans}.
  Let $\mathfrak{n}_1, \mathfrak{n}_2$ be the unit normals at $S_1$,
  respectively $S_2$.  Then we need to determine the absolute value of
  the determinant
  \[
  d = \det (\mathfrak{n_1}, \mathfrak{n}_2, \mathfrak{n}_{S_3} ,
  \mathfrak{n}_{S_3}') = \frac{1}{|\mathfrak{n}_{S_3^c}-(
    \mathfrak{n}_{S_3}\cdot \mathfrak{n}_{S_3^c}) \mathfrak{n}_{S_3}
    |} \det (\mathfrak{n_1}, \mathfrak{n}_2, \mathfrak{n}_{S_3} ,
  \mathfrak{n}_{S_3^c}).
  \]
  In view of \eqref{estnor} we obtain
  \begin{equation*}
    d\sim \det (\mathfrak{n_1}, \mathfrak{n}_2, \mathfrak{n}_{S_3} ,
    \mathfrak{n}_{S_3^c})=\begin{vmatrix}
      \frac{2\xi_{1,1}}{\la 2\xi_1 \ra} & \frac{2\xi_{2,1}}{\la 2\xi_2 \ra} & \frac{\xi_{0,1}}{|\xi_0| \la N_1 \ra} & v_1 \\
      \frac{2\xi_{1,2}}{\la 2\xi_1 \ra} & \frac{2\xi_{2,2}}{\la 2\xi_2 \ra} & \frac{\xi_{0,2}}{|\xi_0| \la N_1 \ra} & v_2 \\
      \frac{2\xi_{1,3}}{\la 2\xi_1 \ra} & \frac{2\xi_{2,3}}{\la 2\xi_2 \ra} & \frac{\xi_{0,3}}{|\xi_0| \la N_1 \ra} & v_3 \\
      \frac{1}{\la 2\xi_1 \ra} & -\frac{1}{\la 2\xi_2 \ra} & -\frac{N_1}{\la N_1 \ra} & 0 \\
    \end{vmatrix}.
  \end{equation*}
  Expansion along the third column shows that
  \begin{equation*}
    |\det (\mathfrak{n_1}, \mathfrak{n}_2, \mathfrak{n}_{S_3} ,
    \mathfrak{n}_{S_3^c})-\tilde{d}|\ls N_1^{-1},
  \end{equation*}
  i.e. the main contribution comes from the $(4,3)$-minor
  \begin{equation*}
    \tilde{d}= \frac{N_1}{\la N_1 \ra} \begin{vmatrix} \frac{2\xi_{1,1}}{\la 2\xi_1 \ra} & \frac{2\xi_{2,1}}{\la 2\xi_2 \ra} & v_1 \\
      \frac{2\xi_{1,2}}{\la 2\xi_1 \ra} & \frac{2\xi_{2,2}}{\la 2\xi_2 \ra} & v_2 \\
      \frac{2\xi_{1,3}}{\la 2\xi_1 \ra} & \frac{2\xi_{2,3}}{\la 2\xi_2 \ra} & v_3
    \end{vmatrix},
  \end{equation*}
  which can be rewritten as
  \[
  \tilde{d}= \frac{N_1}{\la N_1 \ra} \frac{2|\xi_1|}{\la 2\xi_1 \ra}
  \frac{2|\xi_2|}{\la 2\xi_2\ra} \begin{vmatrix} \frac{\xi_{1,1}}{|\xi_1|} & \frac{\xi_{2,1}}{|\xi_2|} & v_1 \\
    \frac{\xi_{1,2}}{|\xi_1|} & \frac{\xi_{2,2}}{|\xi_2|} & v_2 \\
    \frac{\xi_{1,3}}{|\xi_1|} & \frac{\xi_{2,3}}{|\xi_2|} & v_3
  \end{vmatrix}
  =\frac{N_1}{\la N_1 \ra} \frac{2|\xi_1|}{\la 2\xi_1 \ra}
  \frac{2|\xi_2|}{\la 2\xi_2 \ra} \det
  \left(\frac{\xi_1}{|\xi_1|},\frac{\xi_2}{|\xi_2|}, v \right)
  \]
  which, by \eqref{vol}, implies that $|\tilde{d}| \sim A^{-1}\gg
  N_1^{-1}$.  Therefore we have established that $|d| \sim A^{-1}$.
  Recalling that the diameters of $S_1, S_2, S_3^c$ are $\sim A^{-1}$,
  we can now apply Corollary \ref{cor:conv_theta} which implies
  \[
  \| \tilde g_1|_{S_1} \ast \tilde g_2|_{S_2} \|_{L^2(S_3^c)} \ls
  A^{\frac12} \| \tilde g_1 \|_{L^2(S_1)} \| \tilde g_2 \|_{L^2(S_2)}.
  \]
  From \eqref{integral} we obtain
  \[
  \| \tilde g_1|_{S_1} \ast \tilde g_2|_{S_2} \|_{L^2(S_3)} \ls \|
  \tilde g_1 \|_{L^2(S_1)} \| \tilde g_2 \|_{L^2(S_2)}
  \]
  and \eqref{submanifold} follows.
\end{proof}

\begin{proposition}[Parallel high-high interactions]\label{prop:paral_hh}
  Let $f,g_1,g_2\in L^2$, $\|f\|_{L^2}=\|g_1\|_{L^2}=\|g_2\|_{L^2}=1$
  such that
  \begin{equation*}
    \supp(f)\subset \mathfrak{P}_{N}\cap \mathfrak{W}^\pm_L ,\quad 
    \supp(g_k)\subset \mathfrak{Q}^{j_k}_A \cap \mathfrak{P}_{N_k} \cap \mathfrak{S}_{L_k}\quad (k=1,2),
  \end{equation*}
  with $1 \ll N \ls N_1 \sim N_2$. Assume that $A \sim N_1$ and
  $\alpha(j_1,j_2)\ls A^{-1}$.  Then for all $L,L_1,L_2\geq 1$ we have
  \begin{equation}\label{eq:paral_hh}
    |I(f,g_1,g_2)| \ls N_1^{-\frac12} (L_{1} L_{2} L)^{\frac12}.
  \end{equation}
\end{proposition}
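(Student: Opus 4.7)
The plan is to reduce the estimate to a geometric measure computation via Cauchy--Schwarz, exploiting the near-parallel structure to obtain a sharp bound. Writing
\[
I(f,g_1,g_2) = \int g_1(\zeta_1)\,(f\ast g_2)(\zeta_1)\,d\zeta_1,
\]
and applying Cauchy--Schwarz in $\zeta_1$ over $\supp(g_1)$, followed by another Cauchy--Schwarz in $\zeta_2$, one obtains (using $\|f\|_{L^2}=\|g_1\|_{L^2}=\|g_2\|_{L^2}=1$)
\[
|I(f,g_1,g_2)|^2 \ls \sup_{\zeta_1\in\supp(g_1)} |F(\zeta_1)|, \quad F(\zeta_1) := \{\zeta_2 \in \supp(g_2) : \zeta_1 - \zeta_2 \in \supp(f)\}.
\]

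Fixing $\zeta_1=(\xi_1,\tau_1)\in\supp(g_1)$ and $\zeta_2=(\xi_2,\tau_2)\in F(\zeta_1)$, the constraints are: $\xi_2\in T_2$ (the tube of radial length $\sim N_1$ and perpendicular cross-section $\ls 1$, since $A\sim N_1$), $|\xi_1-\xi_2|\sim N$, $\tau_2+|\xi_2|^2=O(L_2)$, and $\tau_1-\tau_2\pm|\xi_1-\xi_2|=O(L)$. Combining these with the Schr\"odinger condition $\tau_1+|\xi_1|^2=O(L_1)$ built into $\zeta_1\in\supp(g_1)$, we derive the resonance identity
\[
|\xi_1|^2 - |\xi_2|^2 \pm |\xi_1-\xi_2| = O\bigl(\max(L,L_1,L_2)\bigr).
\]
Geometrically, the intersection of the tube $T_2$ with the shell $\{\xi_2 : |\xi_1-\xi_2|\in[N/2,2N]\}$ has volume $\ls N$ (a cylinder segment along $T_2$ of radial length $\sim N$ and cross-section $\ls 1$ in the parallel case $\hat\omega_1\approx\hat\omega_2$; drastically smaller---indeed empty unless $N\sim N_1$---in the antiparallel case $\hat\omega_1\approx-\hat\omega_2$, since there $|\xi_1-\xi_2|\sim r_1+r_2\sim N_1$). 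Meanwhile, the two modulation conditions on $\tau_2$ together pin it to an interval of length $\ls\min(L,L_2)$. Hence
\[
|F(\zeta_1)| \ls N\cdot \min(L,L_2).
\]

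It remains to verify $N\cdot\min(L,L_2) \ls LL_1L_2/N_1$, i.e.\ $NN_1 \ls LL_1L_2/\min(L,L_2)$. This follows from the resonance identity combined with non-emptiness: the factorization of $|\xi_1|^2-|\xi_2|^2$ through $(r_1\pm r_2)(r_1\mp r_2)$ together with $|\xi_1-\xi_2|\sim N$ forces $\max(L,L_1,L_2) \gs NN_1$ for $F(\zeta_1)$ to be non-empty, and a short case analysis on which of $L, L_1, L_2$ attains this maximum shows that the ratio $LL_1L_2/\min(L,L_2)$ always dominates $NN_1$. Assembling yields $|I|^2 \ls LL_1L_2/N_1$, which is \eqref{eq:paral_hh}. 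I expect the main obstacle to be the geometric volume bound for $|F(\zeta_1)|$: one must carefully parametrize $T_2$ in the two orientations allowed by $\alpha(j_1,j_2)\ls A^{-1}$, control the perpendicular contributions to $|\xi_1-\xi_2|$ and to $|\xi_k|^2$ so as not to inflate the length-$N$ segment, and verify in the antiparallel case that any contribution with $N\ll N_1$ is ruled out by the incompatibility $|\xi_1-\xi_2|\sim N_1\not\in[N/2,2N]$.
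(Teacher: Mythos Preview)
Your approach is genuinely different from the paper's. The paper reduces first to unit modulation slabs (absorbing the factors $L^{1/2},L_1^{1/2},L_2^{1/2}$ by Cauchy--Schwarz over those decompositions), then runs an iterated almost-orthogonality argument: localizing $\tau$ to an interval of length $\sim N$ forces $|\xi_k|$ into annuli of width $\sim N/N_1$, hence (by the narrow-tube geometry) into unit cubes; this in turn localizes $\tau$ to a unit interval, and a second pass shrinks $|\xi_k|$ to annuli of width $\sim N_1^{-1}$. The final support of $g_k$ has volume $\sim N_1^{-1}$ and the bound follows trivially. Your route---a single Cauchy--Schwarz reduction to the measure of the interaction set $F(\zeta_1)$---is more direct and in principle can work, but the argument as written has a real gap.

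The problem is in the antiparallel case $\hat\omega_1\approx -\hat\omega_2$ with $N\sim N_1$. You claim that the resonance identity forces $\max(L,L_1,L_2)\gtrsim NN_1$, justified via $|\xi_1|^2-|\xi_2|^2=(r_1+r_2)(r_1-r_2)$ and $|\xi_1-\xi_2|\sim N$. But $|\xi_1-\xi_2|\sim |r_1-r_2|$ holds only in the \emph{parallel} orientation; in the antiparallel orientation one has $|\xi_1-\xi_2|\sim r_1+r_2\sim N_1$ while $|r_1-r_2|$ is unconstrained and can vanish. Taking $|\xi_1|=|\xi_2|$ gives $|\xi_1|^2-|\xi_2|^2=0$, so the resonance quantity is $\mp|\xi_1-\xi_2|\sim N_1$, yielding only $\max(L,L_1,L_2)\gtrsim N_1$, not $N_1^2$. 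Concretely: antiparallel, $N\sim N_1$, $L_1\sim N_1$, $L\sim L_2\sim 1$ is a consistent configuration, and here your inequality $NN_1\lesssim L_1\max(L,L_2)$ reads $N_1^2\lesssim N_1$, which is false.

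The fix is not to use the resonance identity as a non-emptiness constraint on the triple $(L,L_1,L_2)$, but to feed it back into the volume bound. Once $\zeta_1$ is fixed, eliminating $\tau_2$ between the Schr\"odinger and wave constraints gives
\[
|\xi_2|^2 \pm |\xi_1-\xi_2| \;=\; -\tau_1 + O(\max(L,L_2)),
\]
which does \emph{not} involve $L_1$. In either orientation the left side has radial derivative $\sim N_1$ in $r_2$, so $r_2$ lies in an interval of length $\lesssim \max(L,L_2)/N_1$; combined with the transverse cross-section $\lesssim 1$ and the $\tau_2$-interval of length $\min(L,L_2)$, this yields $|F(\zeta_1)|\lesssim LL_2/N_1\le LL_1L_2/N_1$ directly, with no case analysis on which modulation is maximal.
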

\begin{proof}
  After a rotation we may assume that the angular localization is such
  that the first spherical cap $\omega_A^{j_1}$ is centered at
  $(1,0,0)$ and the second spherical cap $\omega_A^{j_2}$ is located
  at distance $\ls A^{-1}$ from $(\pm 1, 0,0)$.  Then, if
  $(\xi_k,\tau_k)\in \supp g_k$ for $k=1,2$, and $\xi_0=\xi_1+\xi_2\in
  \supp f$ we have
  \begin{equation} \label{xii}
    |\xi_{1,2}|+|\xi_{1,3}|+|\xi_{2,2}|+|\xi_{2,3}|+|\xi_{0,2}|+|\xi_{0,3}|\ls
    1.
  \end{equation}
  This shows that $|\xi_{1,1} +\xi_{2,1}|=|\xi_{0,1}|\sim N$,
  $|\xi_{1,1}|,|\xi_{2,1}| \sim N_1$.

  In the following, we use almost orthogonality methods to further
  localize all functions to smaller pieces, for which the claim is
  trivial.

  By decomposing $f,g_1,g_2$ into $\sim L,L_1,L_2$ pieces,
  respectively, and applying the Cauchy-Schwarz inequality, it
  suffices to prove
  \begin{equation}\label{eq:reduce2}
    \begin{split}
      &\left| \int g_{1}(\xi_1,\tau_1) g_{2} (\xi_2,\tau_2)
        f(\xi_1+\xi_2,\tau_1+\tau_2) d\xi_1 d\xi_2 d\tau_1 d\tau_2 \right| \\
      \ls{} & N_1^{-\frac12} \| g_{1}\|_{L^2} \| g_{2}\|_{L^2} \|
      f\|_{L^2},
    \end{split}
  \end{equation}
  where $f$ is now supported in $c \leq \tau-|\xi| \leq c+1$ and
  $g_{k}$ is supported in $c_k \leq \tau_k-|\xi_k|^2 \leq
  c_k+1$. Therefore, with respect to the $\tau$ variable, $f$ is
  supported in an interval of length $\sim N$. Using orthogonality, we
  can further localize $g_k$ with respect to the second variable
  $\tau_k$ to intervals of length $\sim N$, $k=1,2$. In turn this
  implies that the spatial frequencies $\xi_k$ can be localized
  further to annuli of width $\sim N N_1^{-1} \leq 1$. In light of
  \eqref{xii} we can strengthen the localization of $g_k$ with respect
  to $\xi_k$ to cubes of side-length $\sim 1$. As a consequence, we
  also improve the localization of the $\xi$-support of $f$ to cubes
  of size $\sim 1$, which then also allows to localize $f$ with
  respect to $\tau$ to intervals of length $\sim 1$.  Now, we repeat
  the above procedure: We can further localize $g_k$ with respect to
  $\tau_k$ to intervals of length $\sim 1$, which also implies a
  better localization for $g_k$ with respect to $\xi_k$ to annuli of
  width $\sim N_1^{-1}$.

  In summary, we have reduced the problem to the case when the volume
  of the supports of $g_1$ and $g_2$ is $\sim N_1^{-1}$ which then
  trivially gives \eqref{eq:reduce2} by virtue of the Cauchy-Schwarz
  inequality.
\end{proof}

Next, we summarize the previous two results in the following
Corollary, which settles the high-high to low interactions with low
modulation.
\begin{corollary}[high-high to low interactions, low
modulation]\label{cor:hhl_lm}
  Assume that $f,g_1,g_2\in L^2$ with $\| f \|_{L^2} =\| g_1\|_{L^2}=
  \|g_2\|_{L^2}=1$ and
  \begin{equation*}
    \supp(f)\subset  \mathfrak{P}_{N}\cap \mathfrak{W}^\pm_L,\quad 
    \supp(g_k)\subset \mathfrak{P}_{N_k} \cap \mathfrak{S}_{L_k}\quad (k=1,2),
  \end{equation*}
  where $N,N_1,N_2$ and $L,L_1,L_2$ satisfy
  \[
  1\ll N\ls N_1 \sim N_2, \qquad L_1,L_2,L \ls N_1^2.
  \]
  Then, the following estimate holds
  \begin{equation} \label{eq2:trans_low_mod} |I(f,g_1,g_2)| \ls (L_1
    L_2 L)^\frac12 N_1^{-\frac12} \log{N_1}.
  \end{equation}  
\end{corollary}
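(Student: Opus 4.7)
The plan is to reduce to the two previous propositions by a Whitney-type angular decomposition of the pair $(g_1,g_2)$ and to absorb the summation over angular scales in the logarithmic loss.

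First I would perform a Whitney decomposition of the angular variables. For each dyadic $A$ with $1\leq A \ls N_1$, write $g_k=\sum_{j_k\in\Omega_A}Q_A^{j_k}g_k$ for $k=1,2$, and decompose
\[
g_1\otimes g_2 \;=\; \sum_{1\leq A\leq A_0}\;\sum_{\substack{(j_1,j_2)\in \Omega_A\times\Omega_A\\ \alpha(j_1,j_2)\sim A^{-1}}} (Q_A^{j_1}g_1)\otimes (Q_A^{j_2}g_2) \;+\; \text{(parallel remainder at $A_0\sim N_1$)},
\]
so that at each dyadic scale $A$ only pairs of caps which are angularly separated by $\sim A^{-1}$ appear, together with a final diagonal piece where $\alpha(j_1,j_2)\ls N_1^{-1}$. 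This is the standard angular Whitney decomposition and uses only $\sim \log N_1$ dyadic values of $A$.

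Next, for each fixed $A$ with $1\leq A \ll N_1$, I would apply Proposition \ref{prop:trans_low_mod} to each term in the inner sum, obtaining
\[
\bigl| I\bigl(f, Q_A^{j_1}g_1,Q_A^{j_2}g_2\bigr)\bigr|
\ls N_1^{-\frac12}(L_1L_2L)^{\frac12}\,\|Q_A^{j_1}g_1\|_{L^2}\|Q_A^{j_2}g_2\|_{L^2}.
\]
Since for fixed $j_1$ only $O(1)$ values of $j_2$ satisfy $\alpha(j_1,j_2)\sim A^{-1}$, the Cauchy--Schwarz inequality and the almost orthogonality of the cap projections yield
\[
\sum_{\substack{(j_1,j_2)\\ \alpha(j_1,j_2)\sim A^{-1}}} \|Q_A^{j_1}g_1\|_{L^2}\|Q_A^{j_2}g_2\|_{L^2} \ls \|g_1\|_{L^2}\|g_2\|_{L^2}=1,
\]
so the $A$-th slice contributes $\ls N_1^{-\frac12}(L_1L_2L)^{\frac12}$ independently of $A$. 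For the diagonal piece at $A_0\sim N_1$ the same conclusion follows from Proposition \ref{prop:paral_hh}.

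The only remaining step is to sum these contributions over the dyadic parameters $A=1,2,4,\ldots,A_0$, which are $O(\log N_1)$ in number. This produces exactly the factor $\log N_1$ appearing on the right-hand side of \eqref{eq2:trans_low_mod}. The main (and essentially only) issue is to arrange the angular Whitney decomposition so that every pair $(j_1,j_2)$ is counted at exactly one scale $A$ and with the sharp separation $\alpha(j_1,j_2)\sim A^{-1}$ that is required as input to Proposition \ref{prop:trans_low_mod}; the constraint $L_1,L_2,L\ls N_1^2$ needed in that proposition is inherited directly from the hypothesis, and all other hypotheses are preserved by the cap localizations.
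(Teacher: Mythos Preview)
Your proposal is correct and follows essentially the same approach as the paper: an angular Whitney decomposition at dyadic scales $1\leq A\ls N_1$, application of Proposition~\ref{prop:trans_low_mod} at each transversal scale and Proposition~\ref{prop:paral_hh} at the final parallel scale, Cauchy--Schwarz over the $O(1)$ neighboring caps, and a dyadic sum in $A$ producing the $\log N_1$ loss. The paper phrases the decomposition slightly more directly (without the ``Whitney'' terminology) but the argument is the same.
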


\begin{proof}
  It suffices to consider non-negative $f,g_1,g_2$. We choose a
  threshold $M=C N_1$ such that for $A< M$ Proposition
  \ref{prop:trans_low_mod} respectively for $A=M$
  Proposition \ref{prop:paral_hh} is applicable, and decompose
  \begin{align*}
    | I(f,g_1, g_2)| \leq &\sum_{A=1}^{M-1} \sum_{\alpha(j_1, j_2) \sim
      A^{-1}} I(f,Q_A^{j_1} g_1, Q_A^{j_2} g_2) \\
    &{}+ \sum_{\alpha(j_1,j_2) \ls M^{-1}}
    I(f,Q_M^{j_1} g_1, Q_M^{j_2} g_2).
  \end{align*}
  Concerning the first sum, we use \eqref{eq:trans_low_mod} for fixed
  $A$ and obtain
  \begin{align*} \sum_{\alpha(j_1, j_2) \sim A^{-1}} I(f,Q_A^{j_1}
    g_1, Q_A^{j_2} g_2) \ls & \frac{(L_1 L_2L)^\frac12}{N_1^{\frac12}}
    \sum_{\alpha(j_1, j_2) \sim
      A^{-1}}\|Q_A^{j_1}g_1\|_{L^2}\|Q_A^{j_2}g_2\|_{L^2}\\
    \ls & N_1^{-\frac12} (L_1 L_2L)^\frac12,
  \end{align*}
  where we use Cauchy-Schwarz in the last step.  Concerning the second
  sum, we use \eqref{eq:paral_hh} for fixed $A$ and obtain the same
  bound as above.

  Dyadic summation with respect to $A$ introduces the additional factor
  $\log{N_1}$, which leads to \eqref{eq2:trans_low_mod}.
\end{proof}

The case of high-high to low interactions with high modulation is
covered by the following Proposition.
\begin{proposition}[high-high to low interactions, high modulation]\label{prop:hhl_hm}
  Assume that $f,g_1,g_2\in L^2$ with $\| f \|_{L^2} =\| g_1\|_{L^2}=
  \|g_2\|_{L^2}=1$ and
  \begin{equation*}
    \supp(f)\subset  \mathfrak{P}_{N}\cap \mathfrak{W}^\pm_L,\quad 
    \supp(g_k)\subset \mathfrak{P}_{N_k} \cap \mathfrak{S}_{L_k}\quad (k=1,2),
  \end{equation*}
  where $N,N_1,N_2$ and $L,L_1,L_2$ satisfy
  \[
  1 \leq N\ls N_1 \sim N_2, \qquad N_1^2 \ls \max\{L,L_1,L_2\}.
  \]
  Then, the following estimate holds
  \begin{equation} \label{eq:high_mod} |I(f,g_1,g_2)| \ls
    N_1^{-\frac12} (L_1 L_2 L)^\frac12 (\max\{L,L_1,L_2\}
    N_1^{-2})^{-\frac12}.
  \end{equation}  
\end{proposition}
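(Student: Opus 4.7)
The plan is to reduce the estimate to a bilinear $L^2$ bound via Plancherel and Cauchy-Schwarz, and then apply the appropriate bilinear Strichartz estimate from Proposition \ref{prop:bilinear-str}, choosing the splitting according to which of $L, L_1, L_2$ realizes the maximum. Set $v=\mathcal{F}^{-1}f$ and $u_k=\mathcal{F}^{-1}g_k$, so that $v$ has wave-type modulation $L$ on frequency $\sim N$, while $u_k$ has Schr\"odinger-type modulation $L_k$ on frequency $\sim N_k$. A standard Plancherel computation rewrites, up to a universal constant and a possible complex conjugation, $I(f,g_1,g_2)=c\,\la v, u_1 \bar u_2\ra_{L^2_{x,t}}$, and Cauchy-Schwarz then gives three equivalent bounds, isolating each of the three factors in turn.

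If $L=\max\{L,L_1,L_2\}$, pair $v$ in $L^2$ and bound the Schr\"odinger-Schr\"odinger product using \eqref{eq:str-schr-schr} combined with its symmetric counterpart (swap $N_1\leftrightarrow N_2$), so that
\[
\|u_1\bar u_2\|_{L^2}\lesssim \min\{N_1,N_2\}\max\{N_1,N_2\}^{-\frac12}(L_1L_2)^{\frac12}\sim N_1^{\frac12}(L_1L_2)^{\frac12}.
\]
If $L_1=\max$, pair $u_1$ in $L^2$ and apply the wave-Schr\"odinger estimate \eqref{eq:str-wave-schr} to $\bar v\,u_2$, obtaining
\[
\|\bar v\,u_2\|_{L^2}\lesssim \min\{N,N_2\}N_2^{-\frac12}(LL_2)^{\frac12}\lesssim N\,N_1^{-\frac12}(LL_2)^{\frac12}.
\]
The case $L_2=\max$ is handled symmetrically, pairing $u_2$ in $L^2$ and estimating $\bar v\,u_1$.

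Finally, a short arithmetic check compares each of these bounds with the right-hand side of \eqref{eq:high_mod}. Unfolding the factor $(\max\{L,L_1,L_2\}\,N_1^{-2})^{-1/2}$ in the target, the desired bound simplifies in Case 1 to $N_1^{1/2}(L_1L_2)^{1/2}$, which matches our estimate exactly, and in Cases 2 and 3 to $N_1^{1/2}(LL_2)^{1/2}$, respectively $N_1^{1/2}(LL_1)^{1/2}$; the slight loss $N\,N_1^{-1/2}$ in place of $N_1^{1/2}$ is absorbed by the hypothesis $N\lesssim N_1$. There is no real obstacle: the analytic content has already been carried out in Proposition \ref{prop:bilinear-str}, and the remaining argument amounts to the case analysis that balances the Strichartz gain against the high-modulation factor $(\max N_1^{-2})^{-1/2}\leq 1$.
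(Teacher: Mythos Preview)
Your proof is correct and follows exactly the same route as the paper: a case split according to which of $L,L_1,L_2$ is maximal, Cauchy--Schwarz to isolate that factor, and then the appropriate bilinear Strichartz estimate (\eqref{eq:str-schr-schr} in Case~1, \eqref{eq:str-wave-schr} in Cases~2 and~3). The paper's own proof is simply the two-line version of what you have written out in detail.
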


\begin{proof} \emph{Case a)} $L=\max\{L,L_1,L_2\}$: Use Cauchy-Schwarz
  and \eqref{eq:str-schr-schr}.

  \emph{Case b)} $L_1 = \max\{L,L_1,L_2\}$ or $L_2 =
  \max\{L,L_1,L_2\}$: Use Cauchy-Schwarz and \eqref{eq:str-wave-schr}.
\end{proof}

The next proposition covers the case of low-high interactions.
\begin{proposition}[low-high interactions]\label{prop:high-low}
  Let $f,g_1,g_2\in L^2$ be functions with
  $\|f\|_{L^2}=\|g_1\|_{L^2}=\|g_2\|_{L^2}=1$ such that
  \begin{equation*}
    \supp(f)\subset \mathfrak{P}_{N}\cap \mathfrak{W}^\pm_L ,\quad 
    \supp(g_k)\subset \mathfrak{P}_{N_k} \cap \mathfrak{S}_{L_k}\quad (k=1,2),
  \end{equation*}
  with $1\leq N_1\ll N_2$.
\begin{enumerate}
\item\label{it:l2_low}
  If $L_2 \ll N_2^{2}$, then we have
  \begin{equation}\label{eq:high-low}
    |I(f,g_1,g_2)| \ls  N_1 N_2^{-\frac12} (L_{1} L_{2} L)^{\frac12}
    \max{(L,L_1,L_2)}^{-\frac12}.
  \end{equation}
\item\label{it:l2_high}
  If $L_2 \gs N_2^{2}$, then we have
  \begin{equation}\label{eq:high-low2}
    |I(f,g_1,g_2)| \ls  N_1^{\frac12}\min\{L,L_1\}^{\frac12}\min\{N_1^2,\max\{L,L_1\}\}^{\frac12}.
\end{equation}
\end{enumerate}
\end{proposition}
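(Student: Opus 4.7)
The plan is as follows. In both parts I first observe that the convolution constraint $\xi_1=\xi_0+\xi_2$ combined with the frequency assumption $|\xi_1|\sim N_1\ll|\xi_2|\sim N_2$ forces $|\xi_0|\sim N_2$, and hence $N\sim N_2$. Combining the three modulation conditions with $\xi_1=\xi_0+\xi_2$ yields the resonance identity
\[
|\xi_0|^2+2\xi_0\cdot\xi_2\mp|\xi_0|=O(L+L_1+L_2),
\]
whose left-hand side has generic size $\sim N_2^2$; this constraint will determine the effective support of the trilinear integrand.

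For Part \ref{it:l2_low} I would prove \eqref{eq:high-low} as the minimum of two Cauchy-Schwarz based estimates. The first estimate pairs $w=\mathcal{F}^{-1}f$ against $u_1\bar u_2$: the Schr\"odinger-Schr\"odinger bilinear Strichartz \eqref{eq:str-schr-schr} (applied in its complex-conjugate variant) yields $\|u_1\bar u_2\|_{L^2}\ls N_1 N_2^{-\frac12}L_1^{\frac12}L_2^{\frac12}$, hence $|I|\ls N_1 N_2^{-\frac12}L_1^{\frac12}L_2^{\frac12}$. For the second estimate I would decompose $f=\sum_C f_C$ into pieces supported on cubes $C\subset\mathfrak{P}_N$ of sidelength $\sim N_1$; since $\xi_2=\xi_1-\xi_0$ with $\xi_1\in\mathfrak{P}_{N_1}$ and $\xi_0\in C$, the relevant part of $g_2$ is localized to a cube of the same sidelength, and a smooth partition of unity $\{g_{2,C'}\}$ at scale $N_1$ covering $\mathfrak{P}_{N_2}$ pairs each $f_C$ with only finitely many $g_{2,C'}$. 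Applying \eqref{eq:str-wave-schr-gen} at scale $d\sim N_1$ (with the wave function restricted to a cube and the Schr\"odinger function at $N_2$) gives $\|w_C\bar u_{2,C'}\|_{L^2}\ls N_1 N_2^{-\frac12}L^{\frac12}L_2^{\frac12}$ per pair, and Cauchy-Schwarz summation over $C$ using orthogonality of $\{f_C\}$ and bounded overlap of $\{g_{2,C'}\}$ produces $|I|\ls N_1 N_2^{-\frac12}L^{\frac12}L_2^{\frac12}$. The minimum of the two bounds is $N_1 N_2^{-\frac12}\min(L,L_1)^{\frac12}L_2^{\frac12}$, which coincides with the right-hand side of \eqref{eq:high-low} in the regime where $L_2$ is not the overall maximum; the remaining subcase $L_2=\max$, $L_2\ll N_2^2$ would be handled by incorporating the thin-slab localization $\xi_2\cdot\hat\xi_0=-(|\xi_0|\pm 1)/2+O(L_2/N_2)$ that comes from the resonance identity, combined with an angular decomposition of $\xi_0$ at scale $A^{-1}\sim L_2/N_2^2$.

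For Part \ref{it:l2_high} the assumption $L_2\gs N_2^2$ eliminates the dispersive structure of $g_2$, so I would use only $|I|\le\|u_1 w\|_{L^2}\|u_2\|_{L^2}=\|u_1 w\|_{L^2}$ and bound the remaining norm by the minimum of two inputs. First, the wave-Schr\"odinger bilinear Strichartz \eqref{eq:str-wave-schr} gives $\|u_1 w\|_{L^2}\ls N_1^{\frac12}L^{\frac12}L_1^{\frac12}$ because $\min(N,N_1)=N_1$. Second, an elementary Plancherel-plus-Cauchy-Schwarz argument bounds $\|u_1 w\|_{L^2}^2\le\sup_\zeta|E(\zeta)|$, where $E(\zeta)=\{\zeta_1\in\supp g_1:\zeta-\zeta_1\in\supp f\}$; the volume of $E(\zeta)$ is at most $N_1^3\min(L,L_1)$ since $\xi_1$ fills $\mathfrak{P}_{N_1}$ and the two modulation constraints on $\tau_1$ overlap in an interval of length $\min(L,L_1)$, yielding $\|u_1 w\|_{L^2}\ls N_1^{\frac32}\min(L,L_1)^{\frac12}$. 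The two estimates balance precisely at $\max(L,L_1)\sim N_1^2$, and their minimum equals $N_1^{\frac12}\min(L,L_1)^{\frac12}\min(N_1^2,\max(L,L_1))^{\frac12}$.

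The main obstacle is the subcase of Part \ref{it:l2_low} in which $L_1$ is the overall maximum, since a direct wave-Schr\"odinger bilinear Strichartz applied to $(w,u_2)$ at full scale produces only the suboptimal factor $N_2^{\frac12}$ instead of the required $N_1 N_2^{-\frac12}$. The cube decomposition of $f$ at scale $N_1$ is the key device that recovers the $(N_1/N_2)$ gain, because within each small cube the effective wave-frequency contribution to the bilinear estimate is $\min(N_1,N_2)=N_1$, and the convolution constraint localizes the matching piece of $g_2$ into a compatible cube so that the almost-orthogonal summation is lossless.
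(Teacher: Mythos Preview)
Your approach matches the paper's proof almost exactly: the two Cauchy--Schwarz estimates in Part~(i) (Schr\"odinger--Schr\"odinger bilinear \eqref{eq:str-schr-schr} for the case $L=\max$, cube decomposition of $f$ at scale $N_1$ combined with the wave--Schr\"odinger bilinear \eqref{eq:str-wave-schr-gen} for the case $L_1=\max$) and the two bounds in Part~(ii) (wave--Schr\"odinger bilinear and the elementary volume estimate for $\|u_1 w\|_{L^2}$) are precisely the ingredients the paper uses, only organized a little differently.

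The one unnecessary complication is your proposed third subcase in Part~(i), namely $L_2=\max\{L,L_1,L_2\}$ with $L_2\ll N_2^2$. This case is empty. You wrote that the resonance quantity $|\xi_1|^2-|\xi_2|^2\mp|\xi_0|$ has ``generic'' size $\sim N_2^2$, but in fact it is \emph{always} of this size under the hypothesis $N_1\ll N_2$: since $|\xi_1|\sim N_1\ll N_2\sim|\xi_2|$ one has $\bigl||\xi_1|^2-|\xi_2|^2\bigr|\sim N_2^2$, while $|\xi_0|\sim N_2\ll N_2^2$ (note that $N_1\geq 1$ together with $N_1\ll N_2$ forces $N_2\gg 1$). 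Hence the resonance identity yields $\max\{L,L_1,L_2\}\gs N_2^2$ unconditionally, and the assumption $L_2\ll N_2^2$ automatically rules out $L_2$ as the maximum. Your two bilinear-Strichartz bounds therefore already cover every case of Part~(i), and the thin-slab/angular decomposition you sketch is not needed.
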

\begin{proof}
  The integral vanishes unless $N_2 \sim N$ and
  \begin{equation}\label{eq:res_in}
    \max\{L,L_1,L_2\}\gs ||\xi_1|^2-|\xi_2|^2\pm
    |\xi_1-\xi_2||\gs N_2^2.
  \end{equation}
  We split the proof into two cases:

  {\it Case a)} $L_2 \ll N_2^{2}$.

  {\it Subcase i)} $L=\max\{L,L_1,L_2\}$. The bilinear $L^2$ estimate
  \eqref{eq:str-schr-schr} yields
  \begin{align*}
    |I(f,g_1,g_2)| \ls
    \|f\|_{L^2}\|\mathcal{F}^{-1}g_1\overline{\mathcal{F}^{-1}g_2}\|_{L^2}
    \ls L_1^\frac12 L_2^\frac12 N_1 N_2^{-\frac12}.
  \end{align*}

  {\it Subcase ii)} $L_1=\max\{L,L_1,L_2\}$. Since $g_1$ is localized
   to a cube of sidelength $N_1$ with respect to the $\xi_1$ variable, by almost
  orthogonality the estimate reduces to the case when $f$ and $g_2$
  are similarly localized to cubes of sidelength $N_1$.
  Then we use the bilinear $L^2$ estimate \eqref{eq:str-wave-schr-gen}
  with $d=N_1$ to obtain
  \begin{align*}
    |I(f,g_1,g_2)| \ls \|g_1\|_{L^2}\|\mathcal{F}^{-1}
    f\mathcal{F}^{-1} g_2\|_{L^2} \ls L^\frac12 L_2^\frac12 N_1
    N_2^{-\frac12}
  \end{align*}
  This finishes the proof of \eqref{eq:high-low}.

  {\it Case b)} $L_2\gs N_2^2$. Again, since $g_1$ is localized
  to a cube of sidelength $N_1$ with respect to the $\xi$ variable, the estimate
  reduces to the case when $f$ and $g_2$ are
  localized to cubes of sidelength $N_1$ with respect to the $\xi$ variables.

  {\it Subcase i)} $L\leq L_1$ and $N_1^2\leq \max\{L,L_1\}$. The volume of the support of $f$ is
  $\ls N_1^3L$, and we estimate
  \[
  |I(f,g_1,g_2)| \ls \| f \|_{L^1} \|g_1\|_{L^2} \|g_2\|_{L^2} \ls
  N_1^\frac32 L^\frac12.
  \]

  {\it Subcase ii)} $L_1< L$ and $N_1^2\leq \max\{L,L_1\}$. The volume of the support of $g_1$ is
  $\ls N_1^3L_1$, and we estimate
  \[
  |I(f,g_1,g_2)| \ls \| f \|_{L^2} \|g_1\|_{L^1} \|g_2\|_{L^2} \ls
  N_1^\frac32 L_1^\frac12.
  \]

  {\it Subcase iii)} $N_1^2> \max\{L,L_1\}$. Cauchy-Schwarz and the bilinear $L^2$ estimate \eqref{eq:str-wave-schr-gen} yields
\[
  |I(f,g_1,g_2)| \ls \|\mathcal{F}^{-1}
    f\mathcal{F}^{-1} g_1\|_{L^2} \|g_2\|_{L^2}\ls N_1^{\frac12} L^\frac12 L_1^\frac12,
  \]
which finishes the proof of \eqref{eq:high-low2}.
\end{proof}

Finally, we deal with the case where the wave frequency is very small.
\begin{proposition}[very small wave frequency]\label{prop:very_small_wave_freq}
  Assume that $f,g_1,g_2\in L^2$ with
  $\|f\|_{L^2}=\|g_1\|_{L^2}=\|g_2\|_{L^2}=1$ such that
  \begin{equation*}
    \supp(f)\subset \mathfrak{P}_{N}\cap \mathfrak{W}^\pm_L ,\quad 
    \supp(g_k)\subset \mathfrak{P}_{N_k} \cap \mathfrak{S}_{L_k}\quad (k=1,2),
  \end{equation*}
  and assume that $N\ls 1$. Then,
  \begin{equation}\label{eq:very_small_wave_freq}
    |I(f,g_1,g_2)|  
    \ls \min{(L,L_1,L_2)}^{\frac12}.
  \end{equation}
\end{proposition}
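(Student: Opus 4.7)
The plan is to establish two bounds, $|I|\ls L^{1/2}$ and $|I|\ls \min(L_1,L_2)^{1/2}$, and take their minimum. Both arguments rely only on the observation that $N\ls 1$ forces $|\xi_1-\xi_2|\ls 1$ on the support of the integrand, which in particular yields $|\supp f|\ls N^3 L\ls L$.

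For the $L^{1/2}$ bound I substitute $\zeta_0=\zeta_1-\zeta_2$ to rewrite $I=\int f(\zeta_0)\,h(\zeta_0)\,d\zeta_0$ with $h(\zeta_0):=\int g_1(\zeta_0+\zeta_2)\,g_2(\zeta_2)\,d\zeta_2$. Cauchy-Schwarz in $\zeta_2$ gives $\|h\|_\infty\leq\|g_1\|_{L^2}\|g_2\|_{L^2}=1$, and a second Cauchy-Schwarz restricted to $\supp f$ yields
\[
|I|\leq \|f\|_{L^2}\,|\supp f|^{1/2}\,\|h\|_\infty \ls L^{1/2}.
\]

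For the $\min(L_1,L_2)^{1/2}$ bound I partition $\R^3$ (the spatial frequency space) into unit cubes $\{Q_\alpha\}$ and decompose $g_k=\sum_\alpha g_k^\alpha$ with $g_k^\alpha:=g_k\cdot \chi_{Q_\alpha\times\R}$. The condition $\zeta_1-\zeta_2\in \supp f$ forces $|\xi_1-\xi_2|\ls 1$, so $I(f,g_1^\alpha,g_2^\beta)$ vanishes unless $Q_\alpha$ and $Q_\beta$ are within unit distance (denoted $\alpha\sim\beta$), and each $\alpha$ has $O(1)$ such $\beta$. The modulation condition $|\tau+|\xi|^2|\sim L_k$ confines $\tau$ to an interval of length $\sim L_k$ for each $\xi\in Q_\alpha$, so $|\supp g_k^\alpha|\ls L_k$. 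The same change of variables writes $I(f,g_1^\alpha,g_2^\beta)=\langle f,\,g_1^\alpha\ast(g_2^\beta)^-\rangle$ with $(g_2^\beta)^-(\zeta):=g_2^\beta(-\zeta)$, and Cauchy-Schwarz together with Young's inequality $L^1\ast L^2\to L^2$ (respectively $L^2\ast L^1\to L^2$) combined with Cauchy-Schwarz on the supports yield
\[
|I(f,g_1^\alpha,g_2^\beta)|\leq \|f\|_{L^2}\,\|g_1^\alpha\ast(g_2^\beta)^-\|_{L^2} \ls \min(L_1,L_2)^{1/2}\,\|g_1^\alpha\|_{L^2}\|g_2^\beta\|_{L^2}.
\]
Summing over admissible $(\alpha,\beta)$ using Cauchy-Schwarz and the almost-orthogonality $\sum_\alpha\|g_k^\alpha\|_{L^2}^2=1$ then gives $|I|\ls \min(L_1,L_2)^{1/2}$.

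The main subtlety is identifying the correct decomposition: a direct application of Young's inequality to the full functions $g_k$ controls $\|g_1\ast g_2^-\|_{L^2}$ only by the much larger $(N_k^3 L_k)^{1/2}$, which is too weak; localizing first in $\xi$ replaces the volume $N_k^3 L_k$ by $L_k$ at the price of summing over $O(1)$ neighbour pairs, and the hypothesis $N\ls 1$ is exactly what makes this almost-orthogonality operative.
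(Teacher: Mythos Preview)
Your proof is correct and follows essentially the same strategy as the paper: reduce (for the $L_1,L_2$ cases) by almost orthogonality to unit-cube localization in the spatial frequencies, then use Cauchy--Schwarz/Young together with the volume bounds $|\supp f|\ls L$, $|\supp g_k^\alpha|\ls L_k$. The only cosmetic difference is that the paper performs the cube decomposition first and then treats all three cases uniformly via $\|f\|_{L^1}$, $\|g_1\|_{L^1}$, $\|g_2\|_{L^1}$, whereas you handle the $L^{1/2}$ bound directly without decomposition; the content is the same.
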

\begin{proof} Using orthogonality we reduce the problem to the case
  when both $g_1$ and $g_2$ are supported in cubes of size $\sim 1$
  with respect to the $\xi_k$ variables.  Then, the volume of the support
  of $f$ is $L$, while the volume of the support of $g_k$ is $L_k$.
  If $L=\min{(L,L_1,L_2)}$, then by using the trivial estimate
  
  \[
  |I(f,g_1,g_2)| \lesssim \| f \|_{L^1} \| g_1 \|_{L^2} \| g_2
  \|_{L^2} \ls L^\frac12
  \]
  the claim follows. If $L_1 = \min{(L,L_1,L_2)}$ then we obtain
  
  \[
  |I(f,g_1,g_2)| \lesssim \| f \|_{L^2} \| g_1 \|_{L^1} \| g_2
  \|_{L^2} \ls L_1^\frac12.
  \]
  The case $L_2 = \min{(L,L_1,L_2)}$ follows in a similar manner.
\end{proof}

\subsection{Proof of Proposition \ref{prop:trilinear}}
We prove parts \ref{it:trilinear_a} and \ref{it:trilinear_b} at the
same time.  We focus on
establishing \eqref{eq:trilinear1} and \eqref{eq:trilinear2} as stated
in part \ref{it:trilinear_a}. Then, at any step we show that we can
improve the corresponding estimate by using the $X_{s,b,\infty}$ norm
instead of $X_{s,\frac12,\infty}$ norm on of the terms involved in
the estimate, where $b$ is a parameter which depends on $s$ and
$\sigma$. The conditions on $b$ will accumulate in several
steps but one has to keep in mind that $b < \frac12$ is the starting
condition and it will not be repeated.

\begin{proof}[Proof of Proposition \ref{prop:trilinear}]
By definition of the norms it is enough to consider functions with
non-negative Fourier transform. We dyadically decompose
\begin{equation*}
  u_k=\sum_{N_k,L_k\geq 1} S_{L_k}P_{N_k}u_k
  \; ,\quad v=\sum_{N,L\geq 1 } W^\pm_{L}P_{N} v.
\end{equation*}
Setting $g^{L_k,N_k}_k=\mathcal{F}S_{L_k}P_{N_k}u_k$ and
$f^{L,N}=\mathcal{F}W^\pm_{L}P_{N} v$, we observe
\[
  I( \mathcal{F}v, \mathcal{F}u_1, \mathcal{F}u_2) =
  \sum_{N,N_1,N_2\geq 1} \sum_{L,L_1,L_2\geq 1} I( f^{L,N},
  g_1^{L_1,N_1}, g_2^{L_2,N_2}).
\]
{\it Case a)} high-high-low interactions, i.e. $N_1\sim N_2\gs
N\gg 1$.  Using \eqref{eq2:trans_low_mod} and
\eqref{eq:high_mod} it follows that
\begin{align*}
  & \sum_{L,L_1,L_2\geq 1} |I( f^{L,N}, g_1^{L_1,N_1},g_2^{L_2,N_2})| \\
  \ls {}& N_1^{-\frac12} \log{N_1} \sum_{L,L_1,L_2\leq
    N_1^2}
  L^{\frac12}\|f^{L,N} \|_{L^2} L_1^{\frac12}\|g_1^{L_1,N_1}\|_{L^2}   L_2^{\frac12}\|g_2^{L_2,N_2}\|_{L^2} \\
  & + N_1^{-\frac12} \sum_{\max\{L,L_1,L_2\}> N_1^2} \frac{N_1}{\max\{L,L_1,L_2\}^\frac12} L^{\frac12}\|f^{L,N} \|_{L^2} L_1^{\frac12}\|g_1^{L_1,N_1}\|_{L^2}   L_2^{\frac12}\|g_2^{L_2,N_2}\|_{L^2}  \\
  \ls {}& N_1^{-\frac12} (\log{N_1})^4 \|P_Nv
  \|_{X^{W\pm}_{0,\frac12,\infty}}
  \|P_{N_1}u_1\|_{X^S_{0,\frac12,\infty}}
  \|P_{N_2}u_2\|_{X^S_{0,\frac12,\infty}}.
\end{align*}
A straightforward modification also shows the bound
\begin{align*}
  \ls & N_1^{\frac12-2b}(\log{N_1})^4
  \|P_Nv\|_{X^{W\pm}_{0,\frac12,\infty}}
  \|P_{N_1}u_1\|_{X^S_{0,\frac12,\infty}}
  \|P_{N_2}u_2\|_{X^S_{0,b,\infty}}
\end{align*}
In order to prove \eqref{eq:trilinear1} we perform the summation with respect to $1 \ll N\leq N_1 \sim N_2$ and obtain
  \begin{align*}
    & \sum_{1 \ll N\leq  N_1 \sim N_2} N ^{-\sigma} N_1^{-\frac12} (\log{N_1})^4  \|P_Nv \|_{X^{W\pm}_{\sigma,\frac12,\infty}} \|P_{N_1}u_1\|_{X^S_{-s,\frac12,\infty}}  \|P_{N_2}u_2\|_{X^S_{s,\frac12,\infty}} \\
    \ls {} &  \|v\|_{X^{W\pm}_{\sigma,\frac12,\infty}} \sum_{1 \ll  N_1 \sim N_2}   \|P_{N_1}u_1\|_{X^S_{-s,\frac12,\infty}}  \|P_{N_2}u_2\|_{X^S_{s,\frac12,\infty}} \\
     \ls{} &\|v\|_{X^{W\pm}_{\sigma,\frac12,\infty}}
    \|u_1\|_{X^S_{-s,\frac12,\infty}} \|u_2\|_{X^S_{s,\frac12,\infty}},
  \end{align*}
  where we have used that $\sigma > -\frac12$.
  If we choose $b$ such that $\frac12-2b-\sigma< 0$, then we also obtain
 \[
 \sum_{1 \ll N\leq N_1 \sim N_2 } |I(f^{N}, g_1^{N_1}, g_2^{N_2})| \ls
 \|v\|_{X^{W\pm}_{\sigma,\frac12,\infty}}
 \|u_1\|_{X^S_{-s,\frac12,\infty}} \|u_2\|_{X^S_{s,b,\infty}}.
 \]

 For proving \eqref{eq:trilinear2} in this case, we perform the summation as follows:
 \begin{align*}
   &  \sum_{1 \ll N\leq  N_1 \sim N_2} (\frac{N}{N_1})^{1+\sigma} N_1^{\frac12+\sigma-2s} (\log{N_1})^4  \|P_N v \|_{X^{W\pm}_{-1-\sigma,\frac12,\infty}} \|P_{N_1}u_1\|_{X^S_{s,\frac12,\infty}}  \|P_{N_2}u_2\|_{X^S_{s,\frac12,\infty}} \\
    \ls {}&  \|v\|_{X^{W\pm}_{-1-\sigma,\frac12,\infty}} \sum_{1 \ll  N_1 \sim N_2}     \|P_{N_1}u_1\|_{X^S_{s,\frac12,\infty}}  \|P_{N_2}u_2\|_{X^S_{s,\frac12,\infty}}\\
   \ls {}& \|v\|_{X^{W\pm}_{-1-\sigma,\frac12,\infty}} \|P_{N_1}u_1\|_{X^S_{s,\frac12,\infty}}  \|P_{N_2}u_2\|_{X^S_{s,\frac12,\infty}},
 \end{align*}
 where we have used that $\sigma - 2s < - \frac12$.  By picking $b$
 such that $\frac32-2b+\sigma-2s < 0$, we also obtain
 \[
 \sum_{1 \ll N\leq N_1 \sim N_2 } |I(f^{N}, g_1^{N_1}, g_2^{N_2})| \ls
  \|v\|_{X^{W\pm}_{-1-\sigma,\frac12,\infty}}
 \|u_1\|_{X^S_{s,\frac12,\infty}} \|u_2\|_{X^S_{s,b,\infty}}.
 \]

 {\it Case b)} very small wave frequency, i.e. $N\ls 1$. In this case,
 either $N_1\sim N_2$ or $N,N_1,N_2\ls 1$. We use \eqref{eq:very_small_wave_freq} and obtain
 \begin{align*}
   & \sum_{L,L_1,L_2\geq 1} |I( f^{L,N}, g_1^{L_1,N_1},g_2^{L_2,N_2})| \\
   \ls {} & \sum_{1 \leq L,L_1,L_2} (\frac{\min\{L,L_1,L_2\}}{L
   L_1 L_2})^{\frac12}
   \|P_N v \|_{X^{W\pm}_{0,\frac12,\infty}} \|P_{N_1}u_1\|_{X^S_{0,\frac12,\infty}}  \|P_{N_2}u_2\|_{X^S_{0,\frac12,\infty}} \\
   \ls {} &\|P_N v \|_{X^{W\pm}_{0,\frac12,\infty}} \|P_{N_1}u_1\|_{X^S_{0,\frac12,\infty}}  \|P_{N_2}u_2\|_{X^S_{0,\frac12,\infty}}.
 \end{align*}
 A similar argument shows
 \begin{align*}
  \sum_{L,L_1,L_2\geq 1} |I( f^{L,N}, g_1^{L_1,N_1},g_2^{L_2,N_2})| 
\ls{} & \|P_N v\|_{X^{W\pm}_{0,\frac12,1}}
   \|P_{N_1}u_1\|_{X^S_{0,\frac12,\infty}}\|P_{N_2}u_2\|_{X^S_{0,b,\infty}}.
 \end{align*}
provided that $b > 0$. \eqref{eq:trilinear1} and \eqref{eq:trilinear2} and their counterpart in ii)
 follow from these estimates since $N_1 \sim N_2$ or $N_1, N_2 \lesssim 1$.

 {\it Case c)} high-low interactions, i.e. $N_1\ll N_2$ or $N_1\gg
 N_2$. We focus on the case $N_1\ll N_2$, the other one being similar.
 Since we apply Proposition \ref{prop:high-low} we need to
 differentiate between the cases $L_2 \ll N_2^2$ and $L_2 \gs
 N_2^2$. In the first case, by \eqref{eq:high-low} and the observation \eqref{eq:res_in} which implies
 that $\max{(L,L_1)} \gs N_2^2$ for non-vanishing interactions, we have
 \begin{align*}
   & \sum_{L_2 \ll N_2^2} \sum_{L,L_1\geq 1} |I( f^{L,N}, g_1^{L_1,N_1},g_2^{L_2,N_2})| \\
   \ls {} & N_1 N_2^{-\frac32} \sum_{L_2 \ll N_2^2} \sum_{L,L_1\geq 1}
   \langle \frac{\max{(L,L_1)}}{N_2^2} \rangle^{-\frac12}
   \|P_N v\|_{X^{W\pm}_{0,\frac12,\infty}} \|P_{N_1}u_1\|_{X^S_{0,\frac12,\infty}}  \|P_{N_2}u_2\|_{X^S_{0,\frac12,\infty}} \\
   \ls {} & N_1 N_2^{-\frac32} (\ln{N_2})^2 \|P_N v\|_{X^{W\pm}_{0,\frac12,\infty}} \|P_{N_1}u_1\|_{X^S_{0,\frac12,\infty}}  \|P_{N_2}u_2\|_{X^S_{0,\frac12,\infty}}.
 \end{align*}
 By the same reasoning we also have
\begin{align*}
   & \sum_{L_2 \ll N_2^2} \sum_{L,L_1\geq 1} |I( f^{L,N}, g_1^{L_1,N_1},g_2^{L_2,N_2})| \\
   \ls {} & N_1 N_2^{-\frac12-2b} (\ln{N_2})^2 \|P_N v\|_{X^{W\pm}_{0,\frac12,\infty}} \|P_{N_1}u_1\|_{X^S_{0,\frac12,\infty}}  \|P_{N_2}u_2\|_{X^S_{0,b,\infty}}.
 \end{align*}

 In the case $L_2 \gs N_2^2$ we use \eqref{eq:high-low2} and obtain
 \begin{align*}
   & \sum_{L_2 \gs N_2^2} \sum_{L,L_1 \geq 1} |I( f^{L,N}, g_1^{L_1,N_1},g_2^{L_2,N_2})| \\
   \ls {}&{} N_1^{\frac12} \sum_{L_2 \gs N_2^2}  \sum_{1\leq L,L_1 \leq N_1^2}L_2^{-\frac12} \|P_N v\|_{X^{W\pm}_{0,\frac12,\infty}} \|P_{N_1}u_1\|_{X^S_{0,\frac12,\infty}}  \|P_{N_2}u_2\|_{X^S_{0,\frac12,\infty}}\\
& +N_1^{\frac32} \sum_{L_2 \gs N_2^2}  \sum_{\max\{L,L_1\}> N_1^2} (\max\{L,L_1\} L_2)^{-\frac12} \|P_N v\|_{X^{W\pm}_{0,\frac12,\infty}} \|P_{N_1}u_1\|_{X^S_{0,\frac12,\infty}}  \|P_{N_2}u_2\|_{X^S_{0,\frac12,\infty}}\\
\ls {}&N_1^{\frac12}(\ln N_1)^2  N_2^{-1} \|P_N v\|_{X^{W\pm}_{0,\frac12,\infty}} \|P_{N_1}u_1\|_{X^S_{0,\frac12,\infty}}  \|P_{N_2}u_2\|_{X^S_{0,\frac12,\infty}}
 \end{align*}
 In a similar manner we obtain
 \begin{align*}
   & \sum_{L_2 \gs N_2^2} \sum_{L,L_1 \geq 1} |I( f^{L,N}, g_1^{L_1,N_1},g_2^{L_2,N_2})| \\
   \ls & N_1^{\frac32-2b} (\ln N_1)^2 N_2^{-1}
   \|P_N v\|_{X^{W\pm}_{0,\frac12,\infty}} \|P_{N_1}u_1\|_{X^S_{0,\frac12,\infty}}  \|P_{N_2}u_2\|_{X^S_{0,\frac12,\infty}}
 \end{align*}

 For proving \eqref{eq:trilinear1} we estimate the above term in the
 worst case in which we place the low Schr\"odinger frequency in the
 space with positive Sobolev regularity and the high Schr\"odinger
 frequency in the space with negative Sobolev regularity. It is
 obvious that the other case gives better estimates. From the above
 inequalities we deduce
 \begin{align*}
   & \sum_{ N_1 \ll N \sim N_2} |I(f^{N}, g_1^{N_1}, g_2^{N_2})| \\
    \ls{}& \sum_{ N_1 \ll  N \sim N_2} N_1^{-s +\frac12} N^{s -1-\sigma}
   (\ln{N_1})^2  \|P_N v\|_{X^{W\pm}_{\sigma,\frac12,\infty}}
\|P_{N_1}u_1\|_{X^S_{s,\frac12,\infty}}
\|P_{N_2}u_2\|_{X^S_{-s,\frac12,\infty}}
 \end{align*}
 If $s \leq \frac12$, then we can bound the above sum by
 \begin{align*}
   & \|P_{N_1}u_1\|_{X^S_{s,\frac12,\infty}} \sum_{ N \sim N_2}
   N^{-\frac12-\sigma} (\ln{N})^3 \|P_N v\|_{X^{W\pm}_{\sigma,\frac12,\infty}}
\|P_{N_2}u_2\|_{X^S_{-s,\frac12,\infty}} \\
   \ls {} & \|P_N v\|_{X^{W\pm}_{\sigma,\frac12,\infty}}
\|P_{N_1}u_1\|_{X^S_{s,\frac12,\infty}}
\|P_{N_2}u_2\|_{X^S_{-s,\frac12,\infty}}
 \end{align*}
 where we have used that $\sigma > -\frac12$.
  
 If $s > \frac12$, then we can bound the above sum by
 \begin{align*}
   & \|P_{N_1}u_1\|_{X^S_{s,\frac12,\infty}} \sum_{ N \sim N_2} N^{s-1-\sigma}   \|P_N v\|_{X^{W\pm}_{\sigma,\frac12,\infty}}
\|P_{N_2}u_2\|_{X^S_{-s,\frac12,\infty}} \\
   \ls {} & \|P_N v\|_{X^{W\pm}_{\sigma,\frac12,\infty}}
\|P_{N_1}u_1\|_{X^S_{s,\frac12,\infty}}
\|P_{N_2}u_2\|_{X^S_{-s,\frac12,\infty}}
 \end{align*}
 where we have used that $s \leq 1+\sigma$. As noted earlier, it is
 obvious that in the case $ N_2\ll N_1\sim N $ the above estimate is easier.
With similar arguments we can verify the counterpart in ii) of these estimates, 
but we omit the details.
  
 Concerning \eqref{eq:trilinear2} we proceed as follows:
 \begin{align*}
   & \sum_{ N_1 \ll N \sim N_2} |I(f^{N}, g_1^{N_1}, g_2^{N_2})| \\
   & \ls \sum_{ N_1 \ll  N \sim N_2} N_1^{-s+\frac12} N^{-s+\sigma}
   (\ln N_1)^2  \|P_N v\|_{X^{W\pm}_{-1-\sigma,\frac12,\infty}}
\|P_{N_1}u_1\|_{X^S_{s,\frac12,\infty}}
\|P_{N_2}u_2\|_{X^S_{s,\frac12,\infty}}
 \end{align*}
 If $s \leq \frac12$, then we can bound the above sum by
 \begin{align*}
   & \|P_{N_1}u_1\|_{X^S_{s,\frac12,\infty}} \sum_{ N \sim N_2} N^{-2s+\frac12+\sigma} (\ln{N})^2 \|P_N v\|_{X^{W\pm}_{-1-\sigma,\frac12,\infty}}
\|P_{N_2}u_2\|_{X^S_{s,\frac12,\infty}}\\
   \ls {} &\|P_N v\|_{X^{W\pm}_{-1-\sigma,\frac12,\infty}} \|P_{N_1}u_1\|_{X^S_{s,\frac12,\infty}} 
\|P_{N_2}u_2\|_{X^S_{s,\frac12,\infty}}
 \end{align*}
 where we have used that $\sigma+\frac12 < 2s$.
  
 If $s > \frac12$, then we can bound the above sum by
 \begin{align*}
   & \|P_{N_1}u_1\|_{X^S_{s,\frac12,\infty}} \sum_{ N \sim N_2} N^{-s+\sigma}  \|P_N v\|_{X^{W\pm}_{-1-\sigma,\frac12,\infty}}
\|P_{N_2}u_2\|_{X^S_{s,\frac12,\infty}}\\
   \ls & \|P_N v\|_{X^{W\pm}_{-1-\sigma,\frac12,\infty}} \|P_{N_1}u_1\|_{X^S_{s,\frac12,\infty}} 
\|P_{N_2}u_2\|_{X^S_{s,\frac12,\infty}}
 \end{align*}
 where we have used that $\sigma \leq s$.
  
 It is an easy exercise to verify the counterpart in ii) of these estimates. 
 This concludes the proof of Proposition \ref{prop:trilinear}.
\end{proof}

\subsection*{Acknowledgments}
The first author was partially supported by the NSF grant DMS-1001676.
The first author would like to thank Todor Milanov for helpful discussions
related to the geometry of the problem. Moreover, the second author
would like to thank Jonathan Bennett for helpful discussions on
\cite{bennett_bez_2010}.
We are grateful to Justin Holmer for drawing our attention to the
Cauchy problem associated with the 3D Zakharov system.

\bibliographystyle{amsplain} \bibliography{zs-refs}

\end{document}